\documentclass[a4paper,oneside]{article}
\usepackage[utf8]{inputenc}
\usepackage[a4paper,margin=3cm]{geometry} 
\usepackage{amsmath}
\usepackage{amsthm}
\usepackage{amscd}
\usepackage{amssymb}
\usepackage{MnSymbol}
\usepackage{latexsym}
\usepackage{eucal}
\usepackage{dsfont}
\usepackage{mathtools}
\usepackage{enumitem}
\usepackage{verbatim}

\usepackage{pdflscape}

\usepackage[colorlinks,pdftex]{hyperref}
\hypersetup{
linkcolor=black,
citecolor=black,
pdftitle={Existence and estimates of moments for Levy-type processes},
pdfauthor={Franziska K\"uhn},
pdfkeywords={generalized moments, Levy-type processes, fractional moments},
}

\widowpenalty=10000
\clubpenalty=10000
\displaywidowpenalty=10000

\makeatletter
\renewcommand\section{\@startsection{section}{1}{0mm}{-1.5\baselineskip}{\baselineskip}{\normalsize\bfseries\sffamily}}
\renewcommand\subsection{\@startsection{subsection}{1}{0mm}{-\baselineskip}{\baselineskip}{\normalsize\bfseries\sffamily}}
\makeatother

\makeatletter
\def\@fnsymbol#1{\ensuremath{\ifcase#1\or *\or **\or \dagger\or \ddagger\or
   \mathsection\or \mathparagraph\or \|\or \dagger\dagger
   \or \ddagger\ddagger \else\@ctrerr\fi}}

\newlength{\preskip}
\setlength{\preskip}{11\p@ \@plus.1\p@ minus 1\p@}
\newlength{\postskip}
\setlength{\postskip}{11\p@ \@plus.1\p@ minus 1\p@}
\makeatother

\newtheoremstyle{theorem}{\preskip}{\postskip}{\itshape}{}{\bfseries}{}
{.5em}{\textbf{\thmname{#1}\thmnumber{ #2} (\thmnote{ #3})}}
\newtheoremstyle{definition}{\preskip}{\postskip}{\normalfont}{0pt}{\bfseries}{}{.5em}{}
\newtheoremstyle{remark}{\preskip}{\postskip}{\normalfont}{0pt}{\bfseries}{}{.5em}{}

\swapnumbers
\theoremstyle{theorem} \newtheorem{thm}{Theorem}[section]
\theoremstyle{theorem} \newtheorem{lem}[thm]{Lemma}
\theoremstyle{theorem} 
\theoremstyle{theorem} \newtheorem{kor}[thm]{Corollary}
\theoremstyle{definition} 
\theoremstyle{definition} 
\theoremstyle{definition} \newtheorem*{ack}{Acknowledgements}
\theoremstyle{remark} \newtheorem*{bem}{Remark}
\theoremstyle{remark} 
\theoremstyle{definition}  \newtheorem{bsp}[thm]{Example}
\theoremstyle{definition}  
\setcounter{tocdepth}{0}
\setlength{\parindent}{0pt}

\DeclareMathOperator \re {Re}

\DeclareMathOperator \tr {tr}

\DeclareMathOperator \spt {supp}

\newcommand{\I}{\mathds{1}}

\newcommand{\cadlag}{c\`adl\`ag }

\newcommand\mc[1] {\mathcal{#1}}
\newcommand\mbb[1] {\mathds{#1}}

\newcommand\T{\rule{0pt}{3.5ex}}       
\newcommand\B{\rule[-6ex]{0pt}{0pt}} 

\linespread{1.2}

\author{%
    Franziska K\"{u}hn\thanks{Institut f\"ur Mathematische Stochastik, Fachrichtung Mathematik, Technische Universit\"at Dresden, 01062 Dresden, Germany, \texttt{franziska.kuehn1@tu-dresden.de}} 
}

\title{Existence and estimates of moments for L\'evy-type processes}

\date{}

\begin{document}

\maketitle

\abstract{\noindent 
    In this paper, we establish the existence of moments and moment estimates for L\'evy-type processes. We discuss whether the existence of moments is a time dependent distributional property, give sufficient conditions for the existence of moments and prove estimates of fractional moments. Our results apply in particular to SDEs and stable-like processes.
\par\medskip

\noindent\emph{Keywords:} L\'evy-type processes, existence of moments, generalized moments, fractional moments. \par \medskip

\noindent\emph{2010 Mathematics Subject Classification:} Primary: 60J75. Secondary: 60G51, 60H05, 60J25.

\section{Introduction} \label{s-intro}

For a L\'evy process $(X_t)_{t \geq 0}$ and a submultiplicative function $f \geq 0$ it is known \begin{enumerate}
	\item \dots that the existence of the generalized moment $\mbb{E} f(X_t)$ does not depend on time, i.\,e.\ $\mbb{E}f(X_{t_0})<\infty$ for some $t_0>0$ implies $\mbb{E}f(X_t)<\infty$ for all $t \geq 0$, see e.\,g.\ \cite[Theorem 25.18]{sato}.  
	\item \dots that the existence of moments can be characterized in terms of the L\'evy triplet, see e.\,g.\ \cite[Theorem 25.3]{sato}. 
	\item \dots what the small-time asymptotics of fractional moments $\mbb{E}(|X_t|^{\alpha})$, $\alpha>0$, looks like, cf. \cite{deng} and \cite{luschgy}.
\end{enumerate}
The first two problems are of fundamental interest; the asymptotics of fractional moments has turned out to be of importance in various parts of probability theory, e.\,g.\ to obtain Harnack inequalities \cite{deng} or to prove the existence of densities for solutions of stochastic differential equations \cite{fou}. Up to now, there is very little known about the answers for the larger class of L\'evy-type processes which includes, in particular, stable-like processes, affine processes and solutions of (L\'evy-driven) stochastic differential equations. The aim of this work is to extend results which are known for L\'evy processes from the L\'evy case to L\'evy-type processes. \par
In the last years, heat kernel estimates for  L\'evy(-type) processes have attracted a lot of attention. Let us point out that the results obtained here have several applications in this area. In a future work\footnote{Update: The paper \cite{ihke} is available now.}, we will show that any rich L\'evy-type process $(X_t)_{t \geq 0}$ with triplet $(b(x),Q(x),N(x,dy))$ satisfies the integrated heat kernel estimate \begin{equation}
	\frac{\mbb{P}^x(|X_t-x| \geq R)}{t} \xrightarrow[]{t \to 0} N(x,\{y \in \mbb{R}^d; |y| \geq R\}) \label{intro-eq1}
\end{equation}
for all $R>0$ such that $N(x,\{y \in \mbb{R}^d; |y|=R\})=0$. Combining this with the statements from Section~\ref{s-exi} gives the small-time asymptotics of $t^{-1} \mbb{E}^x f(X_t)$ for a large class of functions $f$; the functions need not to be bounded or differentiable. The corresponding results for L\'evy processes have been discussed by Jacod \cite{jac} and Figueroa-L\'opez \cite{fig}. As suggested in \cite{fig}, this gives the possibility to extend the generator of the process to a larger class of functions. Moreover, following a similar approach as Fournier and Printems \cite{fou}, the estimates of the fractional moments show the existence of ($L^2$-)densities for L\'evy-type processes with Hölder-continuous symbols. \par
The structure of this paper is as follows. In Section~\ref{s-def}, we introduce basic definitions and notation. The problems mentioned above will be answered in Sections~\ref{s-time}--\ref{s-frac}; starting with the question whether the existence of moments is a time dependent distributional property in Section~\ref{s-time}, we give sufficient conditions for the existence of moments in Section~\ref{s-exi} and finally present estimates of fractional moments in Section~\ref{s-frac}. In each of these sections, we give a brief overview on known results, state some generalizations and illustrate them with examples.

\section{Basic definitions and notation} \label{s-def}

Let $(\Omega,\mathcal{A},\mathbb{P})$ be a probability space. For a random variable $X$ on $(\Omega,\mc{A},\mbb{P})$ we denote by $\mbb{P}_X$ the distribution of $X$ with respect to $\mathbb{P}$. We say that two functions $f,g: \mbb{R}^d \to \mbb{R}$ are \emph{comparable} and write $f \asymp g$ if there exists a constant $c>0$ such that $c^{-1} f(x) \leq g(x) \leq c f(x)$ for all $x \in \mbb{R}^d$. Moreover, we denote by $\mathcal{B}_b(\mbb{R}^d)$ the space of all bounded Borel-measurable functions $u: \mbb{R}^d \to \mathbb{R}$ and by $C_c^2(\mbb{R}^d)$ the space of functions with compact support which are twice continuously differentiable. For $x \in \mbb{R}^d$ and $r>0$ we set $B(x,r) := \{y \in \mbb{R}^d; |y-x|<r\}$ and $B[x,r] := \{y \in \mbb{R}^d; |y-x| \leq r\}$. The $j$-th unit vector in $\mbb{R}^d$ is denoted by $e_j$ and $x \cdot y = \sum_{j=1}^n x_j y_j$ is the Euclidean scalar product. For a function $u: \mbb{R}^d \to \mbb{R}$ we denote by $\partial_{x_j}^k u(x)$ the $k$-th order partial derivative with respect to $x_j$ and by $\nabla^2 u$ the Hessian matrix.  The \emph{Fourier transform} of an integrable function $u:\mbb{R}^d \to \mbb{R}$ is defined as \begin{equation*}
	\hat{u}(\xi) := \frac{1}{(2\pi)^d} \int_{\mbb{R}^d} e^{-i \, x \cdot \xi} u(x) \, dx, \qquad \xi \in \mbb{R}^d.
\end{equation*}
We call a stochastic process $(L_t)_{t \geq 0}$ a ($d$-dimensional) \emph{L\'evy process} if $L_0=0$ almost surely, $(L_t)_{t \geq 0}$ has stationary and independent increments and $t \mapsto L_t(\omega)$ is \cadlag for almost all $\omega \in \Omega$. It is well-known, cf.\ \cite{sato}, that $(L_t)_{t \geq 0}$ can be uniquely characterized via its \emph{characteristic exponent}, \begin{equation*}
	\psi(\xi) = - i \, b \cdot \xi + \frac{1}{2} \xi \cdot Q \xi + \int_{\mbb{R}^d \backslash \{0\}} (1-e^{i \, y \cdot \xi}+ i \, y \cdot \xi \I_{(0,1]}(|y|)) \, \nu(dy), \qquad \xi \in \mbb{R}^d;
\end{equation*}
here, $b \in \mbb{R}^d$, $Q \in \mbb{R}^{d \times d}$ is a symmetric positive semidefinite matrix and $\nu$ is a measure on $(\mbb{R}^d \backslash \{0\}, \mc{B}(\mbb{R}^d \backslash \{0\}))$ such that $\int_{\mbb{R}^d \backslash \{0\}} (|y|^2 \wedge 1) \, \nu(dy)<\infty$. The triplet $(b,Q,\nu)$ is called \emph{L\'evy triplet}. Our standard reference for L\'evy processes is the monograph by Sato \cite{sato}. A stochastic process $(X_t)_{t \geq 0}$ is said to be a (rich) \emph{L\'evy-type process} (or (rich) \emph{Feller process}) if $(X_t)_{t \geq 0}$ is a Markov process whose associated semigroup is Feller on the space of continuous functions vanishing at infinity and the domain of the generator contains the compactly supported smooth functions $C_c^{\infty}(\mbb{R}^d)$; for further details we refer the reader to \cite{ltp}. A theorem due to Courr\`ege and Waldenfels, cf.\ \cite[Corollary 2.23]{ltp}, states that the generator $A$ restricted to $C_c^{\infty}(\mbb{R}^d)$ is a pseudo-differential operator of the form \begin{equation*}
	Au(x) = - \int_{\mbb{R}^d} e^{i \, x \cdot \xi} q(x,\xi) \hat{u}(\xi) \, d\xi, \qquad u \in C_c^{\infty}(\mbb{R}^d), 
\end{equation*}
where \begin{equation}
	q(x,\xi) = q(x,0) - i \, b(x) \cdot \xi + \frac{1}{2} \xi \cdot Q(x) \xi + \int_{\mbb{R}^d} (1-e^{i \, y \cdot \xi}+ i \, y \cdot \xi \I_{(0,1]}(|y|))) \, N(x,dy) \label{def-eq3}
\end{equation}
is the \emph{symbol}. For each fixed $x \in \mbb{R}^d$, $(b(x),Q(x),N(x,dy))$ is a L\'evy triplet. Throughout this work, we will assume that $q(x,0)=0$. Using well-known results from Fourier analysis, it is not difficult to see that \begin{equation*}
	Au(x) = b(x) \cdot \nabla u(x)+\frac{1}{2} \tr(Q(x) \cdot \nabla^2 u(x)) + \int_{\mbb{R}^d \backslash \{0\}} (u(x+y)-u(x)-\nabla u(x) \cdot y \I_{(0,1]}(|y|)) \, N(x,dy)
\end{equation*}
for any $u \in C_c^{\infty}(\mbb{R}^d)$, see e.\,g.\ \cite[Theorem 2.21]{ltp}. We write $(X_t)_{t \geq 0} \sim (b(x),Q(x),N(x,dy))$ to indicate that $(X_t)_{t \geq 0}$ is a L\'evy-type process with triplet $(b(x),Q(x),N(x,dy))$. The symbol of a L\'evy-type process is locally bounded, cf.\ \cite[Theorem~2.27(d)]{ltp}. A L\'evy-type process has \emph{bounded coefficients} if $|q(x,\xi)| \leq C (1+|\xi|^2)$ for some constant $C>0$ which does not depend on $x \in \mbb{R}^d$. By \cite[Lemma~6.2]{schnurr}, the following statements are equivalent for any compact set $K \subseteq \mbb{R}^d$: \begin{enumerate}
	\item $\sup_{x \in K} \sup_{|\xi| \leq 1} |q(x,\xi)| <\infty$,
	\item $\sup_{x \in K} |q(x,\xi)| \leq C_K (1+|\xi|^2)$ for all $\xi \in \mbb{R}^d$,
	\item $\sup_{x \in K} (|b(x)|+|Q(x)|+ \int_{\mbb{R}^d \backslash \{0\}} (|y|^2 \wedge 1) \, N(x,dy)) < \infty$; here $|\cdot|$ denotes an arbitrary vector norm and matrix norm, respectively. 
\end{enumerate}
If $(X_t)_{t \geq 0}$ has bounded coefficients, then the statements are also equivalent for $K=\mbb{R}^d$. We will use the following result frequently; it is compiled from \cite[Theorem 3.13]{cinlar}. We remind the reader that a \emph{Cauchy process} is a L\'evy process with characteristic exponent $\psi(\xi) = |\xi|$. 

\begin{thm} \label{def-5}
	Let $(X_t)_{t \geq 0}$ be a L\'evy-type process with triplet $(b(x),Q(x),N(x,dy))$. There exist a Markov extension $(\Omega^{\circ}, \mathcal{A}^{\circ},\mathcal{F}_t^{\circ},\mbb{P}^{\circ,x})$, a Brownian motion $(W_t^{\circ})_{t \geq 0}$ and a Cauchy process $(L_t^{\circ})_{t \geq 0}$ with jump measure $N^{\circ}$ on $(\Omega^{\circ},\mathcal{A}^{\circ},\mc{F}_t^{\circ},\mbb{P}^{\circ,x})$ such that 
		\begin{equation*}
			X_t - X_0 
			= X_t^1 + X_t^2
		\end{equation*}
		with \begin{align*}
			X_t^1 &:= \int_0^t b(X_{s-}) \, ds + \int_0^t \sigma(X_{s-}) \, dW_s^{\circ} + \int_0^t \!\! \int_{|k| \leq 1} k(X_{s-},z) \, (N^{\circ}(dz,ds)-\nu^{\circ}(dz) \, ds) \\
			X_t^2 &:= \int_0^t \!\! \int_{|k|>1} k(X_{s-},z) \, N^{\circ}(dz,ds)
		\end{align*}
		for measurable functions $\sigma: \mbb{R}^d \to \mbb{R}^{d \times d}$ and $k: \mbb{R}^d \times (\mbb{R} \backslash \{0\}) \to \mbb{R}^d$ satisfying\begin{align}
			N(x,B) &= \int_{\mbb{R} \backslash \{0\}} \I_B(k(x,z)) \, \nu^{\circ}(dz), \qquad B \in \mathcal{B}(\mbb{R}^d \backslash \{0\}), \, x \in \mbb{R}^d, \label{def-eq20}
		\end{align}
		and $Q(x) = \sigma(x) \sigma(x)^T$; here $\nu^{\circ}(dz) = (2\pi)^{-1} z^{-2} \, dz$ denotes the L\'evy measure of a (one-dimensional) Cauchy process.
	\end{thm}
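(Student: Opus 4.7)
\bigskip

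\textbf{Proof plan.} The plan is to show that $(X_t)_{t\ge 0}$ is a semimartingale whose characteristics are determined pointwise by the triplet $(b(x),Q(x),N(x,dy))$, and then to apply a representation theorem of the It\^o–Jacod type to exhibit it as a stochastic integral against a Brownian motion plus an integral against a Poisson random measure with a universal Cauchy intensity. Concretely, I would proceed in four steps.

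\smallskip

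\textbf{Step 1 (semimartingale characteristics).} Using that $C_c^\infty(\mathbb{R}^d)\subseteq \dom(A)$ and the explicit formula for $A$ recalled after \eqref{def-eq3}, a standard Dynkin-type computation shows that for every $u\in C_c^\infty(\mathbb{R}^d)$ the process $u(X_t)-u(X_0)-\int_0^t Au(X_{s-})\,ds$ is a local martingale. Localising and applying this with a well-chosen sequence of cut-off functions and the truncation function $y\mapsto y\I_{(0,1]}(|y|)$ yields that $(X_t)_{t\ge 0}$ is a (possibly on an enlarged space) semimartingale whose differential characteristics with respect to Lebesgue time are exactly $(b(X_{s-}),Q(X_{s-}),N(X_{s-},dy))$; see e.g.\ the Feller/semimartingale results compiled in \cite{ltp}.

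\smallskip

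\textbf{Step 2 (canonical decomposition).} Once $X$ is a semimartingale with these characteristics, its canonical decomposition reads
\begin{equation*}
    X_t - X_0 = \int_0^t b(X_{s-})\,ds + M_t^c + \int_0^t\!\!\int_{|y|\le 1} y\,(\mu^X-\nu^X)(ds,dy) + \int_0^t\!\!\int_{|y|>1} y\,\mu^X(ds,dy),
\end{equation*}
where $M^c$ is the continuous local martingale part with $\langle M^c\rangle_t = \int_0^t Q(X_{s-})\,ds$, $\mu^X$ is the jump random measure of $X$, and the compensator of $\mu^X$ is $\nu^X(ds,dy)=N(X_{s-},dy)\,ds$.

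\smallskip

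\textbf{Step 3 (continuous part).} Choose a measurable square-root $\sigma:\mathbb{R}^d\to\mathbb{R}^{d\times d}$ with $\sigma(x)\sigma(x)^T=Q(x)$ (measurability is standard, e.g.\ via the principal square root of a symmetric positive semidefinite matrix). By the usual representation theorem for continuous local martingales — enlarging the probability space to add an auxiliary Brownian motion if $\sigma$ is degenerate — one obtains a $d$-dimensional Brownian motion $W^\circ$ such that $M^c_t=\int_0^t \sigma(X_{s-})\,dW^\circ_s$.

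\smallskip

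\textbf{Step 4 (jump part via a Cauchy driver).} This is the main point and the one I expect to be the hardest. The goal is to choose a measurable $k:\mathbb{R}^d\times(\mathbb{R}\setminus\{0\})\to\mathbb{R}^d$ and a Poisson random measure $N^\circ$ on $[0,\infty)\times(\mathbb{R}\setminus\{0\})$ with intensity $dt\otimes\nu^\circ(dz)$ such that \eqref{def-eq20} holds and $\mu^X(ds,dy)=\int \I_{\{k(X_{s-},z)\in dy\}}N^\circ(ds,dz)$. For each fixed $x$, both $\nu^\circ$ and $N(x,\cdot)$ are $\sigma$-finite infinite measures on standard Borel spaces with no atom at the origin and no finite-mass issues away from the origin, so a quantile-type rearrangement produces an $x$-wise map $k(x,\cdot)$ transporting $\nu^\circ$ to $N(x,\cdot)$. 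Joint measurability in $(x,z)$ is obtained by building $k$ from the distribution functions $r\mapsto N(x,B_r)$ (which depend measurably on $x$ since $x\mapsto N(x,B)$ is Borel for every Borel $B$) and applying a measurable-selection argument; this is precisely the content of \cite[Theorem~3.13]{cinlar}. With $k$ in hand, the It\^o–Jacod representation gives a Poisson random measure $N^\circ$ (again possibly after enlargement) driving the compensated and uncompensated jump integrals, and this identifies the two terms in Step~2 as $X_t^1$ and $X_t^2$.

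\smallskip

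The assembly of Steps 1–4 yields the claimed decomposition $X_t-X_0=X_t^1+X_t^2$ on the enlarged space $(\Omega^\circ,\mathcal{A}^\circ,\mathcal{F}_t^\circ,\mathbb{P}^{\circ,x})$. The main obstacle is the measurable transport construction in Step~4; the rest is a careful reading of the semimartingale machinery adapted to Lévy-type processes.
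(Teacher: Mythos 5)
The paper does not prove this theorem but simply cites it as compiled from \cite[Theorem~3.13]{cinlar}; your four-step plan (Feller $\Rightarrow$ semimartingale with the right differential characteristics $\Rightarrow$ canonical decomposition $\Rightarrow$ representation of the continuous part via a Brownian motion and of the jump part via a Poisson random measure with Cauchy intensity) is a faithful sketch of the semimartingale representation strategy underlying that reference, and it correctly isolates the measurable construction of the transport map $k$ as the only genuinely delicate point, which you — like the paper — outsource to Cinlar--Jacod. This is the same route as the paper's, so there is no independent argument to compare against; the only small glosses in your Step 4 (the quantile picture is one-dimensional and must be replaced by a Borel-isomorphism argument in $\mathbb{R}^d$, and finite $N(x,\cdot)$ requires sending excess Cauchy mass to the point $0$, which is permissible since \eqref{def-eq20} only constrains $k$ on $\mathbb{R}^d\setminus\{0\}$) are exactly the technicalities handled in the cited theorem.
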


\section{Existence of moments - time independence} \label{s-time}

In this section we adress the question whether the existence of moments is a time dependent distributional property in the class of L\'evy-type processes. Given a L\'evy-type process $(X_t)_{t \geq 0}$ and a measurable function $f: \mathbb{R}^d \to [0,\infty)$, then under which additional assumptions on $(X_t)_{t \geq 0}$ and $f$ does the equivalence \begin{equation}
	\mathbb{E}^x f(X_t) < \infty \, \, \text{for some $t>0$} \iff \mathbb{E}^x f(X_t) < \infty \, \, \text{for all $t>0$}  \label{time-eq1}
\end{equation}
hold true? It is well-known that \eqref{time-eq1} holds for any L\'evy process $(X_t)_{t \geq 0}$ if $f$ is a locally bounded function which is submultiplicative (i.\,e.\ there exists $c>0$ such that $f(x+y) \leq c f(x) f(y)$ for all $x,y \in \mbb{R}^d$), see \cite[Theorem 25.3]{sato}. Analogous results for L\'evy-type processes seem to be unknown. First we discuss whether moments exist backward in time, i.\,e.\ whether \begin{equation}
	\mathbb{E}^x f(X_t) < \infty \, \, \text{for some $t>0$} \iff \mathbb{E}^x f(X_s) < \infty \, \, \text{for all $s \leq t$.}  \label{time-eq2}
\end{equation}
 The following theorem is the main result of this section.
 
\begin{thm} \label{time-1} 
	Let $(X_t)_{t \geq 0}$ be a L\'evy-type process with bounded coefficients and $f: \mbb{R}^d \to (0,\infty)$ measurable. \begin{enumerate}
		\item \label{time-1-i} Suppose there exists a bounded measurable function $g: \mbb{R}^d \to [0,\infty)$, such that $\inf_{|y| \leq r} g(y)>0$ for $r>0$ sufficiently small and \begin{equation}
			\inf_{y \in \mbb{R}^d} \frac{f(z+y)}{f(y)} \geq g(z) \label{time-eq4}
		\end{equation}
		for all $z \in \mbb{R}^d$. Then \begin{equation}
			\mbb{E}^{x} f(X_t) < \infty \iff \sup_{s \leq t} \mbb{E}^{x} f(X_s)< \infty. \label{time-eq3}
		\end{equation}
		\item\label{time-1-ii} \eqref{time-eq4}, hence \eqref{time-eq3}, holds if one of the following conditions is satisfied. \begin{enumerate}
			\item\label{time-1-ii-a} $f$ is submultiplicative and locally bounded.
			\item\label{time-1-ii-b} $\log f$ is H\"{o}lder continuous.
			\item\label{time-1-ii-c} $f$ is Hölder continuous and $\inf_{x \in \mbb{R}^d} f(x)>0$. 
			\item\label{time-1-ii-d} $f$ is differentiable and $\sup_{y \in \mbb{R}^d} \sup_{|z| \leq r} \tfrac{|\nabla f(y+z)|}{f(y)} <\infty$ for $r>0$ sufficiently small.
			\item\label{time-1-ii-e} $f$ is differentiable, $\inf_{y \in \mbb{R}^d} f(y)>0$, $\sup_{y \in \mbb{R}^d} \tfrac{|\nabla f(y)|}{f(y)} < \infty$ and $\nabla f$ is uniformly continuous.
		\end{enumerate}
	\end{enumerate}
\end{thm}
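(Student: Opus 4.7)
The plan is as follows. To prove part (i), I would exploit the Markov property: writing
\[
\mbb{E}^x f(X_t) = \mbb{E}^x\bigl[\mbb{E}^{X_s} f(X_{t-s})\bigr]
\]
for $0 \leq s \leq t$, and granting a uniform lower bound $\mbb{E}^y f(X_r) \geq c_t f(y)$ for all $y \in \mbb{R}^d$ and all $r \in [0,t]$ with some $c_t > 0$, I would conclude
\[
\mbb{E}^x f(X_s) \;\leq\; c_t^{-1}\, \mbb{E}^x f(X_t) \;<\; \infty
\]
uniformly in $s \in [0,t]$, proving \eqref{time-eq3} (the reverse implication is immediate by taking $s=t$).

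The key is therefore the uniform lower bound. For fixed $y$, write $X_r = y + (X_r - y)$ and apply the hypothesis \eqref{time-eq4} with $z = X_r - y$ to obtain
\[
\mbb{E}^y f(X_r) \;\geq\; f(y)\, \mbb{E}^y g(X_r - y).
\]
Picking $\rho > 0$ with $g_0 := \inf_{|z| \leq \rho} g(z) > 0$ gives $\mbb{E}^y g(X_r - y) \geq g_0\, \mbb{P}^y(|X_r - y| \leq \rho)$. The bounded-coefficients assumption yields the standard maximal inequality
\[
\mbb{P}^y\Bigl(\sup_{u \leq r} |X_u - y| \geq \rho\Bigr) \;\leq\; C_\rho\, r,
\]
uniformly in $y \in \mbb{R}^d$, hence $\mbb{E}^y g(X_r - y) \geq g_0/2$ whenever $r \leq \delta := 1/(2C_\rho)$. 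Setting $c_1 := \min\{1, g_0/2\}$ and iterating via the Markov identity $\mbb{E}^y f(X_r) = \mbb{E}^y[\mbb{E}^{X_{r-\delta}} f(X_\delta)] \geq c_1\, \mbb{E}^y f(X_{r-\delta})$ for $r > \delta$ produces $\mbb{E}^y f(X_r) \geq c_1^{\lceil t/\delta \rceil + 1} f(y)$ for all $r \in [0,t]$ and all $y$, as required.

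For part (ii), each case reduces to constructing an explicit $g$ satisfying \eqref{time-eq4}. In (a), submultiplicativity $f(y) \leq c f(-z) f(z+y)$ gives $g(z) := \min\{1, (cf(-z))^{-1}\}$; local boundedness of $f$ makes $f$ bounded on any fixed ball, so $g$ remains bounded below near $0$. In (b), the H\"older bound $|\log f(z+y) - \log f(y)| \leq L|z|^\alpha$ yields $g(z) := e^{-L|z|^\alpha}$. In (c), dividing $|f(z+y) - f(y)| \leq L|z|^\alpha$ by $f(y) \geq \inf f > 0$ produces $g(z) := \max\{0, 1 - L|z|^\alpha/\inf f\}$. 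In (d), the mean value theorem gives $|f(z+y)/f(y) - 1| \leq |z|\, \sup_{|w| \leq |z|} |\nabla f(y+w)|/f(y) \leq C|z|$ for $|z|$ small, so $g$ can be chosen as $\tfrac{1}{2}\I_{B(0,\rho)}$ for small $\rho$. Case (e) reduces to (d) via $|\nabla f(y+z)| \leq |\nabla f(y)| + |\nabla f(y+z) - \nabla f(y)|$, bounding the first term by the gradient-to-value ratio and the second by uniform continuity of $\nabla f$ combined with $\inf f > 0$.

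The main obstacle is the uniform-in-$y$ maximal inequality, which is essential for ensuring that the constant $c_t$ produced by the iteration does not depend on the starting point; this relies crucially on the bounded-coefficients hypothesis through the symbol estimate $\sup_{x \in \mbb{R}^d} |q(x,\xi)| \leq C(1+|\xi|^2)$, and would be established by a standard stopping-time argument using a smooth cutoff together with Dynkin's formula applied to the generator $A$.
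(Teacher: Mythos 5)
Your proposal is correct and follows essentially the same strategy as the paper's proof: part (i) rests on the Markov property combined with the uniform small-time lower bound $\mbb{E}^y g(X_r-y)\geq \alpha>0$ extracted from the maximal inequality under bounded coefficients (this is precisely the paper's Lemma~\ref{time-3}), and your case-by-case constructions of $g$ in part (ii) match the paper's. The minor differences --- you package the iteration as $\mbb{E}^y f(X_r)\geq c_1^n f(y)$ rather than the paper's $\mbb{E}^x f(X_t)\geq \alpha^n \mbb{E}^x f(X_s)$, and in (ii)(c) you argue directly instead of reducing to (ii)(b) --- are cosmetic.
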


For the proof of Theorem~\ref{time-1} we need two auxiliary results. 

\begin{lem}[Maximal inequality] \label{frac-9}
	Let $(X_t)_{t \geq 0}$ be a L\'evy-type process with symbol $q$ and denote by $\tau_r^x := \inf\{t>0; X_t \notin B[x,r]\}$ the exit time from the closed ball $B[x,r]=\{y \in \mbb{R}^d; |y-x|<r\}$. Then there exists $C>0$ such that \begin{equation}
		\mbb{P}^x \left( \sup_{s \leq \sigma} |X_s-x| > r\right) \leq C \mbb{E}^x \left(\int_{[0,\sigma \wedge \tau_r^x)} \sup_{|\xi| \leq r^{-1}} |q(X_s,\xi)| \, ds \right) \label{frac-eq9}
	\end{equation}
	for all stopping times $\sigma$ and $r>0$. In particular, \begin{align}
		\mbb{P}^x \left( \sup_{s \leq \sigma} |X_s-x| > r \right) &\leq C \mbb{E}^x(\sigma) \sup_{|y-x| \leq r} \sup_{|\xi| \leq r^{-1}} |q(y,\xi)|. \label{frac-eq11}
	\end{align}
\end{lem}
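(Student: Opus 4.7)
The plan is to adapt the classical cutoff-plus-Dynkin argument used for maximal inequalities of Feller processes. Fix a smooth cutoff $\chi \in C_c^\infty(\mbb{R}^d)$ with $0 \leq \chi \leq 1$, $\chi(0)=1$ and $\supp \chi \subseteq B(0,1)$, and set $\chi_r^x(y) := \chi((y-x)/r)$. Then $\chi_r^x \in C_c^\infty(\mbb{R}^d) \subseteq \dom(A)$, $\chi_r^x(x)=1$, and $\chi_r^x$ vanishes outside $B(x,r)$; in particular $\chi_r^x(X_{\tau_r^x})=0$ on $\{\tau_r^x<\infty\}$ by right continuity, since $X_{\tau_r^x} \notin B[x,r]$.

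Applying optional stopping to the Dynkin martingale $\chi_r^x(X_t) - 1 - \int_0^t A\chi_r^x(X_s)\,ds$ at the bounded stopping time $\sigma_n := \sigma \wedge \tau_r^x \wedge n$, and using $\chi_r^x(X_{\sigma_n})=0$ on $\{\tau_r^x \leq \sigma \wedge n\}$ together with $\chi_r^x \leq 1$ elsewhere, one obtains
\begin{equation*}
  \mbb{P}^x(\tau_r^x \leq \sigma \wedge n) \;\leq\; 1 - \mbb{E}^x \chi_r^x(X_{\sigma_n}) \;\leq\; \mbb{E}^x \int_0^{\sigma_n} |A\chi_r^x(X_s)|\,ds .
\end{equation*}
Since $\{\tau_r^x \leq \sigma\} = \{\sup_{s \leq \sigma}|X_s - x| > r\}$ by right continuity of paths, monotone convergence as $n \to \infty$ reduces the proof of \eqref{frac-eq9} to a pointwise bound of the form $|A\chi_r^x(y)| \leq C \sup_{|\xi| \leq r^{-1}} |q(y,\xi)|$ with $C$ depending only on $\chi$.

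This pointwise bound is the step I expect to be the main obstacle. Using the pseudo-differential representation of $A$ on $C_c^\infty(\mbb{R}^d)$ and the scaling $\widehat{\chi_r^x}(\xi) = r^d e^{-ix\cdot\xi}\hat\chi(r\xi)$, the substitution $\eta = r\xi$ gives
\begin{equation*}
  |A\chi_r^x(y)| \;\leq\; \int_{\mbb{R}^d} |q(y,\eta/r)|\,|\hat\chi(\eta)|\,d\eta .
\end{equation*}
For each fixed $y$, $q(y,\cdot)$ is a continuous negative definite function, hence $\sqrt{|q(y,\cdot)|}$ is subadditive; the standard consequence $|q(y,\eta/r)| \leq c(1+|\eta|^2)\sup_{|\xi| \leq r^{-1}}|q(y,\xi)|$, combined with the Schwartz decay of $\hat\chi$, delivers the required bound.

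Finally, \eqref{frac-eq11} follows by noting that on the time interval $[0,\sigma \wedge \tau_r^x)$ the process has not yet left $B[x,r]$, so $|X_s - x| \leq r$ and the integrand in \eqref{frac-eq9} is dominated by the deterministic constant $\sup_{|y-x| \leq r}\sup_{|\xi| \leq r^{-1}}|q(y,\xi)|$; factoring it out and using $\sigma \wedge \tau_r^x \leq \sigma$ produces the claim.
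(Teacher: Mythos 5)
Your proof is correct, but it follows a genuinely different route from the paper's. The paper first reduces $\mbb{P}^x(\sup_{s\le\sigma}|X_s-x|>r)$ to $\mbb{P}^x(|X_{\sigma\wedge\tau_r^x}-x|\ge r)$ and then invokes the \emph{truncation inequality} for characteristic functions (\cite[Lemma~1.6.2]{sasvari}), which bounds this probability by $7r^d\int_{[-r^{-1},r^{-1}]^d}\re(1-\mbb{E}^xe^{i\xi\cdot(X_{\sigma\wedge\tau_r^x}-x)})\,d\xi$; Dynkin's formula is then applied to the (not compactly supported) functions $e^{i\xi\cdot(\cdot-x)}$, and the pointwise modulus $|q(X_s,\xi)|$ appears directly, after which a Fubini argument over the cube yields the $\sup_{|\xi|\le r^{-1}}$. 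You instead apply Dynkin's formula to a compactly supported cutoff $\chi_r^x$ (which lies in the domain of the generator by definition of a rich Feller process, so the formula needs no extra justification), and then transfer the dependence on $r$ onto $q$ via the scaling of $\hat\chi$ together with the standard consequence of $\sqrt{|q(y,\cdot)|}$ being subadditive, namely $|q(y,\eta/r)|\le 2(1+|\eta|^2)\sup_{|\xi|\le r^{-1}}|q(y,\xi)|$; the $\int(1+|\eta|^2)|\hat\chi(\eta)|\,d\eta$ is then a finite constant. Your route is essentially the one used in \cite[Theorem~5.1]{ltp} for deterministic $\sigma=t$, upgraded to general stopping times through $\sigma_n:=\sigma\wedge\tau_r^x\wedge n$ and monotone convergence; it has the advantage of never leaving $C_c^\infty$, at the cost of the Fourier-analytic step, whereas the paper's truncation-inequality argument is shorter once that inequality is granted but has to apply Dynkin to non-compactly-supported test functions. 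One small point: right continuity gives only $|X_{\tau_r^x}-x|\ge r$, so $\{\tau_r^x\le\sigma\}\supseteq\{\sup_{s\le\sigma}|X_s-x|>r\}$ rather than equality, but since you bound the larger set the proof is unaffected; choosing $\supp\chi$ inside the open ball $B(0,1)$ as you do indeed ensures $\chi_r^x(X_{\tau_r^x})=0$. The derivation of \eqref{frac-eq11} from \eqref{frac-eq9} is the same as the paper's.
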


Let us remark that \eqref{frac-eq11} is already known for $\sigma := t$, see \cite[Theorem 5.1]{ltp} for a proof.

\begin{proof}
	By the truncation inequality, see e.\,g.\ \cite[(Proof of) Lemma~1.6.2]{sasvari}, we have \begin{align*}
		\mbb{P}^x \left( \sup_{s \leq  \sigma} |X_s-x| > r\right) 
		\leq \mbb{P}^x(\tau_r^x \leq \sigma) 
		&\leq \mbb{P}^x(|X_{\sigma \wedge \tau_r^x}-x| \geq r) \\
		&\leq 7r^d \int_{[-r^{-1},r^{-1}]^d} \re (1-\mbb{E}^xe^{i \, \xi (X_{\sigma \wedge \tau_r^x}-x)}) \, d\xi.
	\end{align*}
	An application of Dynkin's formula yields \begin{align*}
		\mbb{P}^x \left( \sup_{s \leq t} |X_s-x| > r\right) 
		&\leq 7r^d \int_{[-r^{-1},r^{-1}]^d}  \re \mbb{E}^x \left( \int_{[0,\sigma \wedge \tau_r^x)} q(X_s,\xi) e^{i \, \xi (X_s-x)} \, ds \right) \, d\xi.
	\end{align*}
	Now \eqref{frac-eq9} follows from the triangle inequality and Fubini's theorem; \eqref{frac-eq11} is a direct consequence of \eqref{frac-eq9}.
\end{proof}

\begin{lem} \label{time-3}
	Let $(X_t)_{t \geq 0}$ be a L\'evy-type process with bounded coefficients and $g \in \mc{B}_b(\mbb{R}^d)$, $g \geq 0$, such that $\inf_{y \in B[0,r]} g(y)>0$ for $r>0$ sufficiently small. Then \begin{equation*}
		\exists \alpha>0,\delta>0 \, \, \forall x \in\mbb{R}^d, \, t \in (0,\delta]: \quad \mbb{E}^x g(X_t-x) \geq \alpha.
	\end{equation*}
\end{lem}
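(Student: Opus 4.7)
The plan is to estimate $\mbb{E}^x g(X_t - x)$ from below by restricting the expectation to the event that $X_t$ stays close to $x$, on which $g(X_t - x)$ is uniformly bounded below.

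Concretely, choose $r>0$ small enough so that $c_0 := \inf_{y \in B[0,r]} g(y) > 0$. Then, since $g \geq 0$,
\begin{equation*}
  \mbb{E}^x g(X_t - x) \geq c_0 \, \mbb{P}^x\bigl(|X_t - x| \leq r\bigr),
\end{equation*}
so it suffices to bound $\mbb{P}^x(|X_t - x| > r)$ above by a constant strictly less than $1$, uniformly in $x \in \mbb{R}^d$ and for all sufficiently small $t > 0$.

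For this I would apply Lemma~\ref{frac-9} with $\sigma := t$, which gives
\begin{equation*}
  \mbb{P}^x\bigl(|X_t - x| > r\bigr) \leq \mbb{P}^x\Bigl(\sup_{s \leq t} |X_s - x| > r\Bigr) \leq C \, t \, \sup_{|y - x| \leq r} \sup_{|\xi| \leq r^{-1}} |q(y, \xi)|.
\end{equation*}
The bounded-coefficients assumption, via the equivalence recalled in Section~\ref{s-def}, means precisely that $|q(y,\xi)| \leq C'(1+|\xi|^2)$ uniformly in $y \in \mbb{R}^d$. Thus
\begin{equation*}
  M := \sup_{y \in \mbb{R}^d} \sup_{|\xi| \leq r^{-1}} |q(y,\xi)| \leq C'(1 + r^{-2}) < \infty,
\end{equation*}
and the right-hand side of the previous display is at most $CMt$, independently of $x$.

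Choosing $\delta := (2CM)^{-1}$ we obtain $\mbb{P}^x(|X_t - x| > r) \leq 1/2$ for all $x \in \mbb{R}^d$ and $t \in (0, \delta]$. Consequently $\mbb{P}^x(|X_t - x| \leq r) \geq 1/2$, and the first display yields $\mbb{E}^x g(X_t - x) \geq c_0/2 =: \alpha$ for all such $x$ and $t$, which is the claimed lower bound. The only substantive step is the maximal inequality of Lemma~\ref{frac-9} combined with the uniform-in-$x$ bound on the symbol provided by the bounded-coefficients hypothesis; everything else is bookkeeping.
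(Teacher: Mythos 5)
Your proof is correct and takes essentially the same route as the paper: restrict to the event that $X_t$ stays in the ball $B[x,r]$, use the maximal inequality (Lemma~\ref{frac-9}) together with the bounded-coefficients hypothesis to show the complementary probability is uniformly small for small $t$, then conclude. The paper phrases the decomposition in terms of the exit time $\tau_r^x$ and keeps a redundant $-\|g\|_\infty \mbb{P}^x(\tau_r^x \leq t)$ term (unneeded since $g\geq 0$), whereas you drop the small-probability event directly via $\mbb{E}^x g(X_t-x) \geq c_0\,\mbb{P}^x(|X_t-x|\leq r)$; this is a slightly cleaner bit of bookkeeping but not a different idea.
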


\begin{proof}
	Denote by $\tau_r^x := \inf\{t>0; X_t \notin B[x,r]\}$ the exit time from $B[x,r]$. Obviously, \begin{align*}
		\mbb{E}^x g(X_t-x)
		&= \mbb{E}^x \big( g(X_t-x) \I_{\{\tau_r^x>t\}} + g(X_t-x) \I_{\{\tau_r^x \leq t\}} \big) \\
		&\geq \inf_{|y-x| \leq r} g(y-x) (1-\mbb{P}^x(\tau_r^x \leq t)) - \|g\|_{\infty} \mbb{P}^x(\tau_r^x \leq t) \\
		&\geq \inf_{|y| \leq r} g(y) - 2 \|g\|_{\infty} \mbb{P}^x(\tau_r^x \leq t).
	\end{align*}
	By (the proof of) the maximal inequality and boundedness of the coefficients of the symbol, we have \begin{equation*}
		\sup_{x \in \mbb{R}^d} \mbb{P}^x(\tau_r^x \leq t) \leq C t \left( 1+ \frac{1}{r^2} \right) 
	\end{equation*}
	for some constant $C>0$ which does not depend on $t,r$. The claim follows by choosing $r>0$ and $\delta>0$ sufficiently small.
\end{proof}

\begin{proof}[Proof of Theorem~\ref{time-1}] \begin{enumerate}
	\item Obviously, it suffices to prove ``$\Rightarrow$''. By Lemma~\ref{time-3}, there exist $\delta>0$, $\alpha \in (0,1)$ such that $\mbb{E}^y g(X_r-y) \geq \alpha$ for all $y \in \mbb{R}^d$ and $r \in (0,\delta]$. Using the Markov property, we get \begin{align*}
		\mbb{E}^x f(X_t)
		&= \mbb{E}^x \left( \mbb{E}^{X_s} f(X_{t-s}) \right) \\
		&= \int_{\Omega} \!\! \int_{\mbb{R}^d} \frac{f((z-y)+y)}{f(y)} f(y) \, \mbb{P}^y_{X_{t-s}}(dz) \big|_{y=X_s} \, d\mbb{P}^x \\
		&\geq \int_{\Omega} \!\! \int_{\mbb{R}^d} f(y) g(z-y) \, \mbb{P}^y_{X_{t-s}}(dz) \big|_{y=X_s} \, d\mbb{P}^x \\
		&\geq \alpha \mbb{E}^xf(X_s)
	\end{align*}
	for all $s \in [t-\delta,t]$. Iterating this procedure gives $\mbb{E}^x f(X_t) \geq \alpha^n \mbb{E}^x f(X_s)$ for any $s \in [t-n \delta,t]$. Choosing $n \in \mbb{N}$ sufficiently large proves $\sup_{s \leq t} \mbb{E}^x f(X_s) \leq \alpha^{-n} \mbb{E}^x f(X_t)$.
	\item We have to check that there exists a suitable function $g$ satisfying \eqref{time-eq4}.  \begin{enumerate}
		\item Since $f(y) \leq c f(y+z) f(-z)$, we have \begin{equation*}
			\inf_{y \in \mbb{R}^d} \frac{f(z+y)}{f(y)} \geq \frac{1}{c} \frac{1}{f(-z)} \geq \min \left\{ 1, \frac{1}{c} \frac{1}{f(-z)} \right\} =: g(z), \qquad z \in \mbb{R}^d.
		\end{equation*}
		Moreover, as $f$ is locally bounded, $\inf_{y \in B[0,r]} g(y)>0$ for $r$ sufficiently small.
		\item $|\log f(z)- \log f(y)| \leq c |z-y|^{\gamma}$ implies \begin{align*}
			\frac{f(z+y)}{f(y)}
			= \exp \left( \log f(z+y)- \log f(y) \right) 
			\geq \exp \left( -c |z|^{\gamma} \right) =: g(z), \qquad z \in \mbb{R}^d. 
		\end{align*}
		\item As $f>c>0$, Hölder continuity of $f$ implies Hölder continuity of $\log f$, and the claim follows from (b).
		\item By the gradient theorem, \begin{align*}
			|f(y+z)-f(y)| 
			= \left| \int_0^1 \nabla f(y+tz) \cdot z \, dt \right|
			\leq |z| \sup_{|z| \leq r} |\nabla f(y+z)|
		\end{align*}
		for all $|z| \leq r$ and $y \in \mbb{R}^d$. Applying the Cauchy--Schwarz inequality gives \begin{equation*}
		\frac{f(z+y)}{f(y)}
			\geq \min \left\{1, 1- |z| \sup_{y \in \mbb{R}^d} \sup_{|z| \leq r} \frac{|\nabla f(y+z)|}{f(y)} \right\} =: g(z).
		\end{equation*}
		\item This is an immediate consequence of \ref{time-1-ii}(d). \qedhere
	\end{enumerate}
\end{enumerate} \end{proof}

The proof of Theorem~\ref{time-1} actually shows that, under the assumptions of Theorem~\ref{time-1}(i), \begin{equation*}
		\sup_{x \in K} \mbb{E}^xf(X_t-x) <\infty \implies \sup_{x \in K} \sup_{s \leq t} \mbb{E}^x f(X_s-x)< \infty
	\end{equation*}
for any set $K \subseteq \mbb{R}^d$.  Next we show that the moments also exist  forward in time provided that $\mbb{E}^x f(X_t-x)$ is bounded in $x$ and $f$ is submultiplicative.

\begin{kor} \label{time-5}
	Let $(X_t)_{t \geq 0}$ be a L\'evy-type process with bounded coefficients and $f: \mbb{R}^d \to (0,\infty)$ a locally bounded measurable submultiplicative function. Then \begin{equation*}
		\exists t>0: \sup_{x \in \mbb{R}^d} \mbb{E}^x f(X_t-x) < \infty \implies \forall s \geq 0:  \sup_{r \leq s} \sup_{x \in \mbb{R}^d} \mbb{E}^xf(X_r-x)<\infty.
	\end{equation*}
\end{kor}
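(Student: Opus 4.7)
The plan is to combine the backwards-in-time statement, which essentially already follows from Theorem~\ref{time-1}, with a forward iteration based on the Markov property and submultiplicativity. Set $M := \sup_{x \in \mbb{R}^d} \mbb{E}^x f(X_t - x) < \infty$ by hypothesis. Since $f$ is locally bounded and submultiplicative, Theorem~\ref{time-1}(ii)(a) applies, and the remark stated immediately after the proof of Theorem~\ref{time-1} (with $K = \mbb{R}^d$) yields
\begin{equation*}
	B := \sup_{x \in \mbb{R}^d} \sup_{r \leq t} \mbb{E}^x f(X_r - x) < \infty.
\end{equation*}
This handles every $s \leq t$, so what remains is to push the bound forward beyond $t$.

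Write $\Phi(u) := \sup_{x \in \mbb{R}^d} \mbb{E}^x f(X_u - x)$. For arbitrary $s, u \geq 0$ and $x \in \mbb{R}^d$, the Markov property at time $s$ gives
\begin{equation*}
	\mbb{E}^x f(X_{s+u} - x) = \mbb{E}^x \bigl( \mbb{E}^{X_s} f(X_u - x) \bigr).
\end{equation*}
For fixed $y \in \mbb{R}^d$, decompose $f(X_u - x) = f((X_u - y) + (y - x))$ and apply submultiplicativity together with the fact that $f(y - x)$ is deterministic under $\mbb{P}^y$ to obtain
\begin{equation*}
	\mbb{E}^y f(X_u - x) \leq c \, f(y - x) \, \mbb{E}^y f(X_u - y) \leq c \Phi(u) f(y - x).
\end{equation*}
Substituting this back with $y = X_s$ and taking the supremum over $x \in \mbb{R}^d$ yields the key recursion $\Phi(s + u) \leq c \, \Phi(u) \, \Phi(s)$.

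Specializing to $u = t$ gives $\Phi(s + t) \leq c M \, \Phi(s)$ for every $s \geq 0$. Combining this with the base bound $\sup_{r \leq t} \Phi(r) \leq B$, an induction on $n$ shows $\sup_{r \leq n t} \Phi(r) < \infty$ for every $n \in \mbb{N}$; explicitly, $\sup_{r \leq n t} \Phi(r) \leq (\max\{1, cM\})^{n-1} B$. For an arbitrary $s \geq 0$ we simply choose $n \in \mbb{N}$ with $nt \geq s$, which gives $\sup_{r \leq s} \sup_{x \in \mbb{R}^d} \mbb{E}^x f(X_r - x) < \infty$, as claimed. The only genuinely substantive step is the submultiplicative decomposition carried out \emph{inside} the conditional expectation (so that the factor $f(y-x)$ can be pulled out); the rest is bookkeeping, and I do not expect a real obstacle, since the hypotheses of the corollary are tailored precisely to make this Markov-iteration argument work.
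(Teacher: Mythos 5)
Your proof is correct and follows essentially the same route as the paper's: first invoke Theorem~\ref{time-1} (via the remark after its proof) to cover $[0,t]$, then use the Markov property together with submultiplicativity to push the bound forward in blocks of length $t$. The only cosmetic difference is that you phrase the iteration as the multiplicative recursion $\Phi(s+u)\leq c\,\Phi(u)\Phi(s)$, which is a slightly cleaner packaging of the same step the paper carries out with the quantities $M_k$.
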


\begin{proof}
	Fix $t>0$ such that $\sup_{x \in \mbb{R}^d} \mbb{E}^x f(X_t-x)<\infty$. It follows from Theorem~\ref{time-1} that $M_1 := 1 \vee \sup_{x \in \mbb{R}^d} \sup_{s \leq t} \mbb{E}^xf(X_s-x)<\infty$. Using the Markov property and the submultiplicativity of $f$, we find \begin{equation*}
		\mbb{E}^x f(X_r-x)
		= \mbb{E}^x \left( \mbb{E}^y f(X_{r-t}-x) \big|_{y=X_t} \right)
		\leq c \mbb{E}^x \left( \mbb{E}^y f(X_{r-t}-y) f(y-x) \big|_{y=X_s} \right)
		\leq c M_1^2
	\end{equation*}
	for all $r \in [t,2t]$ and $x \in \mbb{R}^d$. Hence, $M_2 := 1 \vee \sup_{r \leq 2t} \sup_{x \in \mbb{R}^d} \mbb{E}^x f(X_r-x)<\infty$. By iteration, we obtain $M_{k} := 1 \vee \sup_{r \leq kt} \sup_{x \in \mbb{R}^d} \mbb{E}^x f(X_r-x)<\infty$ for all $k \in \mbb{N}$ and
	\begin{equation*}
		\sup_{x \in \mbb{R}^d} \sup_{r \leq (k+1)t} \mbb{E}^x f(X_r-x) \leq c M_{k}^2< \infty. \qedhere
	\end{equation*}
\end{proof}

\begin{bem}
	If $f$ is not submultiplicative, then Corollary~\ref{time-5} does, in general, not hold true. For a counterexample in the L\'evy case see e.\,g.\ \cite[Remark 25.9]{sato}.
\end{bem}

\section{Existence of moments - sufficient conditions} \label{s-exi}

In this part, we present sufficient conditions for the existence of moments for L\'evy-type processes. Let us recall the corresponding well-known result for L\'evy processes (cf.\ \cite[Theorem 25.3]{sato}): For a L\'evy process $(X_t)_{t \geq 0}$ with L\'evy triplet $(b,Q,\nu)$, we have \begin{equation*}
	\mbb{E}^x f(X_t) < \infty \, \, \text{for some (all) $t>0$}  \iff \int_{|y| \geq 1} f(y) \, \nu(dy)< \infty
\end{equation*}
for any locally bounded measurable submultiplicative function $f: \mbb{R}^d \to (0,\infty)$. In \cite[Theorem 5.11]{ltp} it was observed that for $f(y) := \exp(\zeta y)$, $\zeta \in \mbb{R}^d$, the implication \begin{equation}
	\sup_{x \in \mbb{R}^d} \int_{|y| \geq 1} f(y) \, N(x,dy)<\infty \implies \forall x \in \mbb{R}^d, t \geq 0: \mbb{E}^x f(X_t)<\infty \label{exi-eq1}
\end{equation}
still holds true  for any L\'evy-type process $(X_t)_{t \geq 0}$ with bounded coefficients. In Theorem~\ref{exi-1} we extend this result and show \eqref{exi-eq1} for any function $f \geq 0$ which is comparable to a submultiplicative $C^2$-function. In the second part of this section, we discuss the connection between differentiability of the symbol and existence of moments.


\begin{thm} \label{exi-1} 
	Let $(X_t)_{t \geq 0} \sim (b(x),Q(x), N(x,dy))$ be a L\'evy type process and $K \subseteq \mbb{R}^d$ a compact set. Let $f : \mbb{R}^d \to [0,\infty)$ be a measurable function and $g \in C^2$ submultiplicative such that $g \geq 0$ and $f \asymp g$. Then for any $t>0$ \begin{equation*}
		\sup_{x \in K} \int_{|y| \geq 1} f(y) \, N(x,dy) <\infty \implies \sup_{s \leq t} \sup_{x \in K} \mbb{E}^x f(X_{s \wedge \tau_K}-x)<\infty 
	\end{equation*}
	and  \begin{equation}
		\mbb{E}^x f(X_{t \wedge \tau_K}) \leq C f(x) \exp \left( C (M_1+M_2)t \right) \label{exi-eq2}
	\end{equation}
	where $\tau_K := \inf\{t>0; X_t \notin K\}$ denotes the exit time from the set $K$, $C=C(K)>0$ is a constant (which does not depend on $(X_t)_{t \geq 0}$ and $t$) and
	\begin{equation*}
			M_1 := \sup_{x \in K} \left( |b(x)| + |Q(x)|+ \int_{\mbb{R}^d \backslash \{0\}} (|y|^2 \wedge 1) N(x,dy) \right)<\infty \quad \quad M_2 := \sup_{x \in K} \int_{|y| \geq 1} f(y) \, N(x,dy)<\infty.
		\end{equation*}
		If $(X_t)_{t \geq 0}$ has bounded coefficients, then the claim holds for $K= \mbb{R}^d$. 
\end{thm}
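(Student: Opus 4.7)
The plan is to reduce to proving the bounds for $g$ in place of $f$ (which is harmless since $f \asymp g$), then to apply Dynkin's formula to $g$ stopped at $t \wedge \tau_K$. First I would observe that a nonzero submultiplicative $g \geq 0$ is strictly positive everywhere: if $g(x_0)=0$ for some $x_0$, then $g(0) \leq c g(x_0) g(-x_0) = 0$, which forces $g(y) = g(y+0) \leq cg(y)g(0)=0$ for all $y$. So I may assume $g>0$ throughout.

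The heart of the argument is the pointwise bound $|Ag(y)| \leq C(K)(M_1+M_2)$ for $y \in K$, with $C(K)$ depending only on $K$. Because $K + B[0,1]$ is compact and $g \in C^2$, the quantities $\sup_K g$, $\sup_K |\nabla g|$ and $\sup_{y \in K, |\eta| \leq 1}|\nabla^2 g(y+\eta)|$ are all finite; together with Taylor's theorem this controls the drift, diffusion and compensated small-jump parts of $Ag$ by $C(K) M_1$. For the large jumps I would invoke submultiplicativity: $g(y+z) \leq c g(y) g(z) \leq C(K) g(z)$, so that
\begin{equation*}
\int_{|z|>1} g(y+z)\, N(y,dz) \leq C(K) \int_{|z|>1} g(z)\, N(y,dz) \leq C(K) M_2
\end{equation*}
(after using $g \asymp f$ in the integrand), while $g(y)\int_{|z|>1} N(y,dz) \leq C(K) M_1$. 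Dynkin's formula, applied to the smooth truncation $g_R := g\cdot \phi(\cdot/R)$ with $\phi \in C_c^\infty(\mbb{R}^d)$ radially decreasing and $\phi \equiv 1$ near $0$, followed by monotone convergence as $R \to \infty$, then yields
\begin{equation*}
\mbb{E}^x g(X_{t \wedge \tau_K}) \leq g(x) + C(K)(M_1+M_2)\, t.
\end{equation*}
Since $\inf_K g > 0$, the right-hand side is at most $C(K)\, g(x) \exp(C(K)(M_1+M_2)t)$, giving \eqref{exi-eq2} (and, via $g \asymp f$, the version with $f$). The first assertion of the theorem then drops out from submultiplicativity: $g(X_{s\wedge\tau_K}-x) \leq c g(-x)\, g(X_{s\wedge\tau_K}) \leq C(K)\, g(X_{s\wedge\tau_K})$, as $-x \in -K$ is compact.

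The main technical obstacle is justifying Dynkin's formula for the non-compactly-supported $g$: one must ensure that the generator estimate carries over to $g_R$ uniformly in $R$. I would verify this by choosing $R$ large enough that $\phi(\cdot/R) \equiv 1$ on a fixed neighborhood of $K$, so that on $K$ the drift, diffusion and small-jump parts of $Ag_R$ agree pointwise with those of $Ag$, while the large-jump integrand is dominated by $g(y+z) N(y,dz)$ via $0 \leq g_R \leq g$. For the extension to $K = \mbb{R}^d$ in the bounded coefficients case I would exhaust $\mbb{R}^d$ by compacts $K_n \uparrow \mbb{R}^d$ and let $n \to \infty$ using $\tau_{K_n} \to \infty$ almost surely; the delicate point is the $K_n$-dependence of the constant, which typically requires one to upgrade the generator bound to a genuinely multiplicative estimate $|Ag(y)| \leq C(M_1+M_2)\, g(y)$ and then close with Gronwall --- a step that leans on the pointwise derivative estimates $|\nabla^j g| \leq c_j g$ ($j=1,2$) available for the standard submultiplicative examples.
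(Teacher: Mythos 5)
The compact-$K$ case is handled correctly and by a genuinely different route than the paper's: you bound the full generator $|Ag(y)| \leq C(K)(M_1+M_2)$ pointwise on $K$ --- using that $g$, $\nabla g$, $\nabla^2 g$ are bounded on the compact set $K+B[0,1]$ and using submultiplicativity for the large-jump integral --- and close directly with Dynkin's formula. The paper instead invokes the Cinlar--Jacod representation of Theorem~\ref{def-5} to split $X_t - X_0 = X_t^1 + X_t^2$ and applies It\^o's formula to the product $h(X^1_t)f(X^2_t)$, where $h(x)=a\exp(b(\sqrt{|x|^2+1}-1))$ is the dominating exponential of the submultiplicative $f$. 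For compact $K$ your argument is shorter and in fact yields a linear-in-$t$ bound $\mbb{E}^x g(X_{t\wedge\tau_K}) \le g(x) + C(K)(M_1+M_2)t$.

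However, the extension to $K=\mbb{R}^d$ (bounded coefficients) has a genuine gap, which you flag but do not resolve. Your proposed repair requires the multiplicative generator bound $|Ag(y)| \le C(M_1+M_2)\,g(y)$, which needs $|\nabla g| \le c\,g$ and $|\nabla^2 g| \le c\,g$; these do \emph{not} follow from $g\in C^2$ being submultiplicative. For instance $g(x)=2+\sin(|x|^2)$ is $C^\infty$, bounded, and hence submultiplicative, yet $|\nabla g|/g$ is unbounded. Swapping $g$ for the dominating exponential $h$ (which does satisfy $|h'|+|h''|\le C h$) also fails, because $\int_{|z|>1}h(z)\,N(x,dz)$ can be infinite even when $\int_{|z|>1}f(z)\,N(x,dz)<\infty$: the hypothesis $M_2<\infty$ controls $f$, not $h$. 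This is precisely what the paper's decomposition is designed to circumvent: $h$ is applied only to $X^1$ (drift, diffusion, compensated small jumps --- where $C^2$ control is needed but no large-jump integrability), while $f$ is kept on the pure large-jump component $X^2$ (where only submultiplicativity and $M_2<\infty$ enter). The resulting Gronwall constant $C_3=C_3(M_1,M_2,f)$ is $K$-independent, so $K=\mbb{R}^d$ follows at no extra cost. Without that decomposition or an equivalent device, your argument does not close the bounded-coefficients case.
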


\begin{proof}
	To keep notation simple, we only give the proof for $d=1$. We can assume without loss of generality that $f \in C^2$ is submultiplicative (otherwise replace $f$ by $g$). 
	Let $(\Omega^{\circ}, \mathcal{A}^{\circ},\mathcal{F}_t^{\circ},\mbb{P}^{\circ,x})$, $(W_t^{\circ})_{t \geq 0}$, $(L_t^{\circ})_{t \geq 0}$, $N^{\circ}$ and $k,\sigma$ be as in Theorem~\ref{def-5}. For fixed $R>0$ define an $\mathcal{F}_t^{\circ}$-stopping time by \begin{equation*}
		\tau_R^x := \inf\{t>0; \max\{|X_t^1|,|X_t^2|\} \geq R\}
	\end{equation*}
	and set $\tau := \tau_K \wedge \tau_R^x$. By the submultiplicativity of $f$, we have \begin{equation*}
		f(X_t-X_0) = f(X_t^1+X_t^2) \leq c f(X_t^1) f(X_t^2)
	\end{equation*}
	for some constant $c>0$. Since a submultiplicative function growths at most exponentially, cf.\ \cite[Lemma 25.5]{sato}, there exist constants $a,b>0$ such that \begin{equation*}
		f(X_t-X_0) \leq a \exp \left( b (\sqrt{(X_t^1)^2+1}-1) \right) f(X_t^2) =: h(X_t^1) f(X_t^2).
	\end{equation*}
	Moreover, a straightforward calculation shows  \begin{equation}
		|h'(x)|+|h''(x)| \leq C_1 h(x), \qquad x \in \mbb{R}, \label{exi-eq5}
	\end{equation}
	for some constant $C_1>0$. By It\^o's formula and optional stopping, \begin{align*}
		\mbb{E}^{\circ,x}(&h(X_{t \wedge \tau}^1) f(X_{t \wedge \tau}^2))-a f(0) \\
		&= \mbb{E}^{\circ,x} \left( \int_{[0,t \wedge \tau)} h'(X_{s-}^1) f(X_{s-}^2) b(X_{s-}) \, ds \right)
		+ \frac{1}{2} \mbb{E}^{\circ,x} \left( \int_{[0,t \wedge \tau)} h''(X_{s-}^1) f(X_{s-}^2) \sigma^2(X_{s-}) \, ds \right) \\
		& + \mbb{E}^{\circ,x} \left( \int_{[0,t \wedge \tau)} \int_{|k|\leq 1} f(X_{s-}^2) (h(X_{s-}^1+k(X_{s-},y))-h(X_{s-}^1)-h'(X_{s-}^1) k(X_{s-},y)) \, \nu^{\circ}(dy) \, ds \right) \\
		&+ \mbb{E}^{\circ,x} \left( \int_{[0,t \wedge \tau)}\int_{|k| > 1} h(X_{s-}^1) (f(X_{s-}^2+k(X_{s-},y))-f(X_{s-}^2)) \, \nu^{\circ}(dy) \, ds \right) \\
		&=: I_1+I_2+I_3+I_4.
	\end{align*}
	Recall that $\nu^{\circ}$ denotes the L\'evy measure of the Cauchy process $(L_t^{\circ})_{t \geq 0}$. We estimate the terms separately. By \eqref{exi-eq5} and the definition of $M_1$, it follows easily that \begin{equation*}
		|I_1|+|I_2| \leq C_1 M_1 \mbb{E}^{\circ,x} \left( \int_{[0,t \wedge \tau)} h(X_{s-}^1) f(X_{s-}^2) \, ds \right).
	\end{equation*}
	For $I_4$ we note that by the submultiplicativity of $f$ and \eqref{def-eq20}, \begin{align*}
		|I_4|
		&\leq c \mbb{E}^{\circ,x} \left( \int_{[0,t \wedge \tau)}\!\! \int_{|k|>1} h(X_{s-}^1) f(X_{s-}^2) (1+f(k(X_{s-},y))) \, \nu^{\circ}(dy) \, ds \right) \\
		&\leq c (M_1+M_2) \mbb{E}^{\circ,x} \left( \int_{[0,t \wedge \tau)} h(X_{s-}^1) f(X_{s-}^2) \, ds \right).
	\end{align*}
	It remains to estimate $I_3$. By Taylor's formula, we have \begin{equation*}
		|h(x+z)-h(x)-h'(x)z| \leq \frac{1}{2} |h''(\xi)| z^2
	\end{equation*}
	for some intermediate value $\xi = \xi(x,z) \in (x,x+z)$. Since there exists $C_2>0$ such that $|h''(\xi)| \leq C_2 h(x)$ for all $|z| \leq 1$ and $x \in \mbb{R}$, we get \begin{align*}
		|I_3| 
		\leq C_2 M_1 \mbb{E}^{\circ,x} \left( \int_{[0,t \wedge \tau)} h(X_{s-}^1) f(X_{s-}^2) \, ds \right).
	\end{align*}
	Combining all estimates shows that $\varphi(t) := \mbb{E}^{\circ,x}(h(X_{t \wedge \tau}^1) f(X_{t \wedge \tau}^2) \I_{\{t<\tau\}})$ satisfies \begin{equation*}
		\varphi(t) 
		\leq \mbb{E}^{\circ,x}\left(h(X_{t \wedge \tau}^1) f(X_{t \wedge \tau}^2)  \right) 
		\leq a f(0) + C_3 \int_0^t \varphi(s) \, ds
	\end{equation*}
	for some constant $C_3=C_3(M_1,M_2,f)$. Now it follows from Gronwall's inequality, see e.\,g.\ \cite[Theorem A.43]{bm2}, that $\varphi(t) \leq af(0) e^{C_3 t}$.  Finally, using Fatou's lemma, we can let $R \to \infty$ and obtain \begin{equation*}
		\mbb{E}^{x} f(X_{t \wedge \tau_K}-x) \leq \mbb{E}^{\circ,x}(h(X_{t \wedge \tau_K}^1) f(X_{t \wedge \tau_K}^2)) \leq a f(0) e^{C_3 t}. 
	\end{equation*}
	This proves $\sup_{x \in K} \sup_{s \leq t} \mbb{E}^x f(X_{s \wedge \tau_K}-x)< \infty$; \eqref{exi-eq2} follows from $f(X_t) \leq c f(X_t-x) f(x)$ and the previous inequality.
\end{proof}

\begin{bem}
	The proof of Theorem~\ref{exi-1} shows that the statement holds true for any function $f$ such that there exist $g_1 \in C^2$ submultiplicative, $g_2 \in C^2$ subadditive, $g_1 \geq 0$, $\inf_{x \in \mbb{R}^d} g_2(x)>0$ and $f \asymp g := g_1 \cdot g_2$.
\end{bem}

\begin{bsp} \label{exi-3} \begin{enumerate}
	\item Let $(X_t)_{t \geq 0}$ be a L\'evy-type process with uniformly bounded jumps, i.\,e.\ there exist $R_1,R_2>0$ such that $\spt N(x,\cdot) \subseteq \{y \in \mbb{R}^d; R_1 \leq |y| \leq R_2\}$ for all $x \in \mbb{R}^d$. Then we have \begin{equation*}
		\sup_{x \in \mbb{R}^d} \sup_{s \leq t} \mbb{E}^x f(X_s-x) < \infty \qquad \text{for all $t \geq 0$}
	\end{equation*}
 	for any measurable function $f \geq 0$ which is comparable to a submultiplicative $C^2$-function (e.\,g.\ $f(x) = |x|^{\alpha} \vee 1$, $\alpha>0$, $f(x) = \exp(|x|^{\beta})$, $\beta \in (0,1]$, $f(x) = \log(|x| \vee e)$, \ldots)
	\item Let $(X_t)_{t \geq 0}$ be a stable-like process, that is a L\'evy-type process with symbol $q(x,\xi) = |\xi|^{\alpha(x)}$ for some function $\alpha: \mbb{R}^d \to (0,2)$; for the existence of such processes see \cite{hoh}. If we set $\alpha_l := \inf_{x \in \mbb{R}^d} \alpha(x)$, then, by Theorem~\ref{exi-1}, \begin{equation*}
		\sup_{x \in \mbb{R}^d} \sup_{s \leq t} \mbb{E}^x(|X_s-x|^{\alpha})< \infty \qquad \text{for all $\alpha \in [0,\alpha_l)$.}
	\end{equation*}
\end{enumerate} \end{bsp}

Now we turn to the question whether regularity of the symbol is related to the existence of moments. It is a classical result that for the characteristic function $\chi(\xi) := \mbb{E}e^{i \, \xi X}$ of a random variable $X$, \begin{equation*}
	\text{$\chi$ is $2n$ times differentiable at $\xi =0$} \iff \mbb{E}(X^{2n})<\infty
\end{equation*}
for all $n \in \mbb{N}$. In particular for a L\'evy process $(X_t)_{t \geq 0}$ with characteristic exponent $\psi$ it follows easily from the L\'evy-Khintchine formula that \begin{equation*}
		\text{$\psi$ is $2n$ times differentiable at $\xi =0$} \implies \forall t \geq 0: \mbb{E}(|X_t|^{2n})<\infty.
\end{equation*}

Theorem~\ref{exi-5} below shows that this result can be extended to L\'evy-type processes. For the proof we use the following statement which is of independent interest. To keep notation simple we state the result only in dimension $d=1$; it can be easily extended to higher dimensions by considering $q_j(x,\eta) := q(x,\eta e_j)$, $\eta \in \mbb{R}$, for $j \in \{1,\ldots,d\}$. Here, $e_j$ denotes the $j$-th unit vector in $\mbb{R}^d$.

\begin{lem} \label{exi-6} 
	Let $(q(x,\xi))_{x \in \mbb{R}}$ be a family of negative definite functions with L\'evy-Khintchine representation \eqref{def-eq3} and assume that $q(x,0)=0$ for all $x \in \mbb{R}$. Let $n \in \mbb{N}$ and $K \subseteq \mbb{R}$ be a compact set. Then the following statements are equivalent. \begin{enumerate}
		\item\label{exi-6-i} $q(x,\cdot)$ is $2n$ times differentiable for all $x \in K$, $\xi \in \mbb{R}$ and $\sup_{x \in K} \sup_{\xi \in \mbb{R}} |\partial_{\xi}^{2n} q(x,\xi)| <\infty$.
		\item\label{exi-6-ii} $q(x,\cdot)$ is $2n$ times differentiable at $\xi=0$ for all $x \in K$ and $\sup_{x \in K} |\partial_{\xi}^{2n} q(x,0)| < \infty$.
		\item\label{exi-6-iii} $\sup_{x \in K} \int_{\mbb{R} \backslash \{0\}} y^{2n} \, N(x,dy)<\infty$.
	\end{enumerate}
	In this case, \begin{equation}
		\frac{\partial^k}{\partial \xi^k} q(x,\xi) = \begin{cases} - i b(x) + Q(x) \xi + i \int_{\mbb{R} \backslash \{0\}} (\I_{(0,1]}(|y|))-e^{i \, y \xi}) y \, N(x,dy), & k=1, \\ Q(x) + \int_{\mbb{R} \backslash \{0\}} y^2 e^{i \, y \, \xi} \, N(x,dy), & k=2, \\ i^{k+2} \int_{\mbb{R}\backslash \{0\}} y^k e^{i \, y \xi} \, N(x,dy), & k \in \{3,\ldots,2n\}. \end{cases} \label{exi-eq6}
	\end{equation}
	If $q$ has bounded coefficients, then (i)-(iii) are equivalent for $K = \mbb{R}$.
\end{lem}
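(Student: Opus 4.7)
The plan is to prove the cycle \ref{exi-6-iii} $\Rightarrow$ \ref{exi-6-i} $\Rightarrow$ \ref{exi-6-ii} $\Rightarrow$ \ref{exi-6-iii}, with the formulas in \eqref{exi-eq6} dropping out as a by-product of the \ref{exi-6-iii} $\Rightarrow$ \ref{exi-6-i} direction.

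For \ref{exi-6-iii} $\Rightarrow$ \ref{exi-6-i} and \eqref{exi-eq6}, I would differentiate the L\'evy--Khintchine integrand termwise in $\xi$. The first-order derivative of $1-e^{iy\xi}+iy\xi\I_{(0,1]}(|y|)$ is $iy[\I_{(0,1]}(|y|)-e^{iy\xi}]$; its modulus is $O(y^2)$ for $|y|\leq 1$ (from $|1-e^{iy\xi}|\leq|y\xi|$) and dominated by $|y|$ for $|y|>1$, so the L\'evy condition together with $|y|\leq 1+y^{2n}$ for $|y|>1$ and hypothesis \ref{exi-6-iii} yields an integrable envelope, uniformly in $x\in K$ and locally uniformly in $\xi$. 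For $k\in\{2,\ldots,2n\}$ each further differentiation produces an integrand of modulus $|y|^k \leq y^2 \I_{\{|y|\leq 1\}}+y^{2n}\I_{\{|y|>1\}}$, again integrable uniformly in $x\in K$ by \ref{exi-6-iii} and the L\'evy condition. Dominated convergence then justifies \eqref{exi-eq6}, and the explicit formula for $\partial_\xi^{2n}q$ gives $\sup_{x\in K}\sup_{\xi\in\mbb{R}}|\partial_\xi^{2n}q(x,\xi)|\leq\sup_{x\in K}\int y^{2n}\,N(x,dy)<\infty$, proving \ref{exi-6-i}. The implication \ref{exi-6-i} $\Rightarrow$ \ref{exi-6-ii} is immediate by restriction to $\xi=0$.

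The substantive direction is \ref{exi-6-ii} $\Rightarrow$ \ref{exi-6-iii}. I would pass to the real part $\psi(x,\xi):=\re q(x,\xi)=\tfrac{1}{2}Q(x)\xi^2+\int(1-\cos(y\xi))\,N(x,dy)$, which is even in $\xi$, non-negative, vanishes at $\xi=0$, and whose $2n$-th derivative at $0$ inherits the uniform bound $|\psi^{(2n)}(x,0)|\leq|\partial_\xi^{2n}q(x,0)|$. A pointwise induction on $k=0,1,\ldots,n-1$ will establish $\int y^{2k+2}\,N(x,dy)<\infty$. The base case $k=0$ uses evenness and $\psi(x,0)=0$ to write $\psi^{(2)}(x,0)=\lim_{h\to 0} 2\psi(x,h)/h^2 = Q(x)+\lim_{h\to 0}\int 2(1-\cos(yh))/h^2\,N(x,dy)$; Fatou's lemma (the integrand is non-negative and converges pointwise to $y^2$) then yields $\int y^2\,N(x,dy)\leq\psi^{(2)}(x,0)-Q(x)$. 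For the inductive step, once $\int y^{2k}\,N(x,dy)<\infty$ the argument of the previous paragraph legitimates
\begin{equation*}
	\psi^{(2k)}(x,\xi) = Q(x)\I_{\{k=1\}}+(-1)^{k+1}\int y^{2k}\cos(y\xi)\,N(x,dy),
\end{equation*}
whence $\phi_k(\xi):=(-1)^{k+1}[\psi^{(2k)}(x,0)-\psi^{(2k)}(x,\xi)]=\int(1-\cos(y\xi))\,\tilde N_k(dy)$ with $\tilde N_k(dy):=y^{2k}N(x,dy)$ is itself even, non-negative, and vanishes at $0$. Using the evenness of $\psi^{(2k)}$, the symmetric second difference $2\phi_k(h)/h^2$ tends to $(-1)^k\psi^{(2k+2)}(x,0)$, which is finite by hypothesis; applying the base-case Fatou argument to $\phi_k$ then gives $\int y^{2k+2}\,N(x,dy)\leq|\psi^{(2k+2)}(x,0)|$, closing the induction.

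At the terminal step $k=n-1$ the bound reads $\int y^{2n}\,N(x,dy)\leq|\psi^{(2n)}(x,0)|\leq|\partial_\xi^{2n}q(x,0)|$, and taking $\sup_{x\in K}$ under hypothesis \ref{exi-6-ii} delivers \ref{exi-6-iii}; the case $K=\mbb{R}$ under bounded coefficients is treated identically. The hard part will be orchestrating the shifts $\phi_k$ so that the Fatou estimate at each stage keys into a higher-order derivative of $\psi$ that is known to exist; the payoff is that the intermediate Fatou bounds are pointwise only (which suffices for pointwise integrability), while hypothesis \ref{exi-6-ii} enters exclusively at the terminal step to deliver uniformity over $K$.
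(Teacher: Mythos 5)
Your argument is correct and uses the same essential machinery as the paper: differentiation under the integral for \ref{exi-6-iii} $\Rightarrow$ \ref{exi-6-i}, and a Fatou-plus-symmetric-second-difference argument combined with induction on the order of the moment for \ref{exi-6-ii} $\Rightarrow$ \ref{exi-6-iii}. Your pointwise induction on $k$ via the auxiliary functions $\phi_k$ is a cleaner bookkeeping of the paper's induction on $n$ (which implicitly invokes the inductive hypothesis with $K$ replaced by the singleton $\{x\}$ to get the pointwise moment bounds), but the underlying idea — that the uniform hypothesis is only needed at the terminal order while the intermediate moments need only be finite pointwise to justify the Lévy–Khintchine differentiation formula — is the same.
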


\begin{proof}
	Obviously, \ref{exi-6-i} $\Rightarrow$ \ref{exi-6-ii}, so it suffices to prove \ref{exi-6-ii} $\Rightarrow$ \ref{exi-6-iii} $\Rightarrow$ \ref{exi-6-i}. We prove the claim by induction.\par
	
	$n=1$: Suppose that \ref{exi-6-ii} holds true. Using the classical identities \begin{equation}
				\frac{1}{2} = \lim_{y \to 0} \frac{1- \cos(y)}{y^2} \qquad \text{and} \qquad \lim_{h \to 0} \frac{\phi(2h)-2 \phi(0)+\phi(-2h)}{4h^2} = \phi''(0) \label{exi-eq7}
			\end{equation}
			for $\phi$ twice differentiable at $0$, we find by Fatou's lemma \begin{align*}
				\int_{\mbb{R}\backslash \{0\}} y^2 \, N(x,dy)
				&= 2 \int_{\mbb{R}\backslash \{0\}} y^2 \lim_{h \to 0} \frac{1-\cos(2hy)}{(2h y)^2} \, N(x,dy) \\
				&\leq \liminf_{h \to 0} \frac{1}{2h^2} \int_{\mbb{R}} (1-\cos(2h y)) \, N(x,dy) \\
				&= 2 \liminf_{h \to 0} \left( \frac{q(x,2h)+q(x,-2h)}{4h^2} - Q(x) \right) \\
				&= 2 \frac{\partial^2}{\partial \xi^2} q(x,0)- 2 Q(x).
		\end{align*}
		Since $Q$ is locally bounded, cf.\ \cite[Theorem 2.27]{ltp}, \ref{exi-6-iii} follows. On the other hand, if \ref{exi-6-iii} holds, then it is obvious from the L\'evy-Khintchine representation that $q(x,\cdot)$ is twice differentiable and that \eqref{exi-eq6} holds for $k=1,2$.\par
	$n-1 \to n$: Suppose that \ref{exi-6-ii} holds for $n \geq 2$. Then, by the induction hypothesis, we get as in the first part of the proof 
		\begin{align*}
				\int_{\mbb{R}\backslash \{0\}} y^{2n} \, N(x,dy)
				&\leq \liminf_{h \to 0} \frac{1}{2h^2} \int_{\mbb{R}\backslash \{0\}} y^{2(n-1)} (1-\cos(2hy)) \, N(x,dy) \\
				&= 2 (-1)^{n-1} \liminf_{h \to 0} \frac{1}{4h^2} \left( \frac{\partial^{2n-2}}{\partial \xi^{2n-2}} q(x,2h) -2 \frac{\partial^{2n-2}}{\partial \xi^{2n-2}} q(x,0)+ \frac{\partial^{2n-2}}{\partial \xi^{2n-2}} q(x,-2h) \right)  \\
				&= 2 (-1)^{n-1} \frac{\partial^{2n}}{\partial \xi^{2n}} q(x,0).
			\end{align*}
			This shows \ref{exi-6-iii}. If \ref{exi-6-iii} holds, then we can use again the L\'evy-Khintchine representation to conclude that $q(x,\cdot)$ is $2n$ times differentiable, $\sup_{x \in K} \sup_{\xi \in \mbb{R}} |q^{(2n)}(x,\xi)| < \infty$ and that \eqref{exi-eq6} holds.
 \end{proof}
 
Using Lemma~\ref{exi-6}, we obtain the following statement. 

\begin{thm} \label{exi-5}
	Let $(X_t)_{t \geq 0} = (X_t^{(1)},\ldots,X_t^{(d)})_{t \geq 0} \sim (b(x),Q(x),N(x,dy))$ be a L\'evy-type process with symbol $q$ and let $K \subseteq \mbb{R}^d$ be compact. Suppose that $\mathbb{R} \ni \xi \mapsto q_j(x,\xi) := q(x,\xi e_j)$ is $2n$ times differentiable at $\xi = 0$ for all $x \in \mbb{R}^d$ and \begin{equation}
		\left| \frac{\partial^k}{\partial \xi^k} q_j(x,0) \right| \leq c_k (1+|x_j|^k), \qquad k=1,\ldots,2n, \label{exi-eq9}
	\end{equation}
	for some constants $c_k>0$. Then there exist $C_1,C_2>0$ such that \begin{equation*}
		\sup_{x \in K} \sup_{s \leq t} \mbb{E}^x((X_s^{(j)}-x_j)^{2n}) \leq C_1 t e^{C_2 t}  \qquad \text{for all $t \geq 0$.}
	\end{equation*}
\end{thm}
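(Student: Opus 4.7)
The plan is to apply Dynkin's formula to the test function $f_x(y) := (y_j - x_j)^{2n}$, to estimate the generator $A f_x$ pointwise by using Lemma~\ref{exi-6} to translate hypothesis \eqref{exi-eq9} into moment bounds on $N(y,\cdot)$, and to close the resulting integral inequality via Gronwall. Since $f_x(x)=0$, this approach naturally yields the $t$-prefactor in the target bound $C_1 t\,e^{C_2 t}$.

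First I would apply Lemma~\ref{exi-6} to the family of one-dimensional negative definite functions $(q_j(y,\cdot))_{y\in\mbb{R}^d}$ (its proof makes no use of the one-dimensional structure of the parameter). Hypothesis \eqref{exi-eq9} with $k=2n$ verifies condition~(ii) of the lemma, hence gives condition~(iii) and the explicit formulas \eqref{exi-eq6}. Pointwise in $y\in\mbb{R}^d$ and for all $k\in\{2,\ldots,2n\}$, this yields
\begin{equation*}
\int z_j^k\,N(y,dz) \le C_k(1+|y_j|^k),
\end{equation*}
together with $0 \le Q_{jj}(y)\le c_2(1+|y_j|^2)$ and $\bigl|b_j(y)+\int_{|z|>1} z_j\,N(y,dz)\bigr| = |\partial_\eta q_j(y,0)| \le c_1(1+|y_j|)$.

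Next, I would rewrite the generator with $v:=y_j-x_j$. The binomial identity $(v+z_j)^{2n}-v^{2n}-2nv^{2n-1}z_j=\sum_{k=2}^{2n}\binom{2n}{k}v^{2n-k}z_j^k$ holds for every $z_j\in\mbb{R}$, so the truncation $\I_{|z|\le 1}$ in the jump integral contributes only at the level $k=1$ and may be regrouped with the drift to give
\begin{equation*}
A f_x(y) = 2nv^{2n-1}\Bigl[b_j(y)+\int_{|z|>1} z_j\,N(y,dz)\Bigr] + n(2n-1)v^{2n-2}Q_{jj}(y) + \sum_{k=2}^{2n}\binom{2n}{k}v^{2n-k}\int z_j^k\,N(y,dz).
\end{equation*}
Inserting the bounds from the previous step, together with $|y_j|^k\le C_K(1+|v|^k)$ (valid because $|x_j|$ is bounded on $K$) and the elementary estimates $|v|^{2n-k}|v|^k = |v|^{2n}$, $|v|^{2n-k}\le 1+|v|^{2n}$, one obtains $|A f_x(y)| \le C_K(1+f_x(y))$ uniformly for $y\in\mbb{R}^d$.

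Finally, Dynkin's formula, applied after a $C_c^2$ cutoff of $f_x$ and with localization by $\tau_R:=\inf\{t:|X_t|\ge R\}$ to accommodate the polynomial growth of $f_x$, yields, for $x\in K$ and $R$ large, $\varphi(t):=\mbb{E}^x f_x(X_{t\wedge\tau_R})\le C_K t + C_K\int_0^t\varphi(s)\,ds$. Gronwall's inequality then gives $\varphi(t)\le C_K t\,e^{C_K t}$, and passing $R\to\infty$ via Fatou's lemma delivers the claim. I expect the main obstacle to be the regrouping in the generator: separately, the term $\int_{|z|>1} z_j\,N(y,dz)$ is \emph{not} polynomially controlled by the one-dimensional hypothesis on $q_j$ (because the generator's cutoff uses $|z|$ rather than $|z_j|$), but keeping it together with $b_j(y)$ produces the sum $i\,\partial_\eta q_j(y,0)$, which \emph{is} bounded by the $k=1$ case of \eqref{exi-eq9}.
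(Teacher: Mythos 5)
Your proposal is correct and takes a genuinely different, more direct route than the paper. The paper proves the theorem by induction on $n$, applying Dynkin's formula to bounded oscillatory test functions of the form $(z-x)^{2(n-1)}(e^{ih(z-x)}-1)$ with a $C_c^2$ cutoff, then passing $h\to0$ via Fatou's lemma and the second-difference formula \eqref{exi-eq7} to reach the key integral inequality; the induction is needed to ensure the finiteness of the lower-order moments that make the intermediate integrals well defined. You instead work directly with the polynomial $f_x(y)=(y_j-x_j)^{2n}$, expand $Lf_x$ by the binomial theorem, and import the moment bounds on $N(y,dz)$ from Lemma~\ref{exi-6}, which eliminates the induction and the oscillatory limit $h\to0$ altogether. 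You correctly spot (as the paper does implicitly in the line invoking $b(z)+\int_{|y|\geq1}y\,N(z,dy)$) that the large-jump term must not be separated from the drift — only the combination $b_j(y)+\int_{|z|>1}z_j\,N(y,dz)=i\,\partial_\eta q_j(y,0)$ is controlled by \eqref{exi-eq9}, because $N(y,\{|z|>1\})$ by itself is merely locally bounded — and that the binomial Taylor remainder $\sum_{k\ge2}\binom{2n}{k}v^{2n-k}z_j^k$ is cutoff-free, so only the linear term feels $\I_{|z|\leq1}$. Both approaches use Lemma~\ref{exi-6}, Gronwall, and a final $R\to\infty$ via Fatou.

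Two points deserve a little more care than your sketch gives. First, for odd $k\in\{3,\ldots,2n-1\}$ the formula \eqref{exi-eq6} together with \eqref{exi-eq9} bounds $\bigl|\int z_j^k\,N(y,dz)\bigr|$, not $\int|z_j|^k\,N(y,dz)$; this is enough for the estimate of $|Lf_x|$ since you only need the signed integrals in the binomial expansion, but the absolute integrability (needed to justify exchanging $\sum_k$ with $\int\,N(y,dz)$) should be argued separately via $|z_j|^k\le z_j^2+z_j^{2n}$. Second, the cutoff-removal step — replacing Dynkin for $f_x\chi_m\in C_c^2$ by Dynkin for $f_x$ — is the one genuinely delicate point your outline compresses: for $|y|\le R$ (so $s<\tau_R$) the local and small-jump parts of $A(f_x\chi_m)(y)$ stabilize once $m$ exceeds $R$, and the large-jump part $\int_{|z|>1}f_x(y+z)\chi_m(y+z)\,N(y,dz)$ is dominated uniformly in $m$ by a quantity that is bounded on $\{|y|\le R\}$ thanks to the pointwise moment bound $\int z_j^{2n}\,N(y,dz)<\infty$ from Lemma~\ref{exi-6}(iii) and the local boundedness of $N(y,\{|z|>1\})$; then Fatou on the left-hand side of Dynkin and dominated convergence on the right-hand side yield the inequality $\mbb{E}^x f_x(X_{t\wedge\tau_R})\le \mbb{E}^x\int_0^{t\wedge\tau_R}Lf_x(X_s)\,ds$, which is all Gronwall needs. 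Spelling this out would close the one gap; the paper's oscillatory-function trick is precisely a device to avoid having to argue this DCT step, at the cost of an induction and a more opaque computation.
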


\begin{proof}
	We show the result only for $d=1$; for $d>1$ replace $h$ by $h \cdot e_j$. Throughout this proof, we denote by $L$ the operator \begin{equation*}
		Lf(x) := b(x) f'(x) + \frac{1}{2} Q(x) f''(x) + \int_{\mbb{R}\backslash \{0\}} (f(x+y)-f(x)-f'(x)y \I_{(0,1]}(|y|)) \, N(x,dy), \quad x \in \mbb{R},
	\end{equation*}
	which is well-defined for any $f \in C_b^2(\mbb{R})$. We remind the reader that any function $f \in C_c^2(\mbb{R})$ is contained in the domain of the generator $A$ of $(X_t)_{t \geq 0}$ and that $Af=Lf$. \par
	We prove the claim by induction and start with $n=1$. By Lemma~\ref{exi-6}, $\sup_{x \in K} \int_{\mbb{R}\backslash \{0\}} y^2 \, N(x,dy) < \infty$. Set $f_{h,x}(z) := e^{i \, (z-x)h}-1$ for fixed $h,x \in \mbb{R}$. Using Taylor's formula and the identity \begin{align*}
		Lf_{h,x}(z)+& Lf_{-h,x}(z) \\
		&= -2h \sin((z-x)h) b(z) -2 \cos(h(z-x)) h^2 Q(z) \\
		&+ 2 \int_{\mbb{R}\backslash \{0\}} (\cos((z+y-x)h)-1)-(\cos((z-x)h)-1)+yh \I_{(0,1]}(|y|)) \sin(h(z-x)) \, N(z,dy),
	\end{align*}
	it follows easily that $\sup_{|h| \leq 1} (Lf_{h,x}(z)+Lf_{-h,x}(z))/h^2$ is locally bounded (in $z$). 
	For fixed $R>0$ set $\tau:= \tau_R^x := \inf\{t>0; X_t \notin B(x,R)\}$ and $\varphi(t) := \mbb{E}^x(|X_{t \wedge \tau}-x|^2 \I_{\{t<\tau\}})$. By \eqref{exi-eq7}, \begin{align*}
		\varphi(t)
		\leq \mbb{E}^x(|X_{t \wedge \tau}-x|^2)
		&= 2 \int_{\Omega} |X_{t \wedge \tau}-x|^2 \lim_{h \to 0} \frac{1-\cos(2h  (X_{t \wedge \tau}-x))}{4h^2 (X_{t \wedge \tau}-x)^2} \, d\mathbb{P} \\
		&\leq \liminf_{h \to 0} \frac{1}{4h^2} \left( -\mbb{E}^x e^{i \, 2h  (X_{t \wedge \tau}-x)}+2- \mbb{E}^x e^{-i \, 2h (X_{t \wedge \tau}-x)} \right).
	\end{align*}
	Pick a cut-off function $\chi \in C_c^2(\mbb{R})$ such that $\I_{B(0,1)} \leq \chi \leq \I_{B(0,2)}$. Applying Dynkin's formula to the truncated functions $y \mapsto (-e^{-i 2h (y-x)}+1) \chi(y/n) \in C_c^2(\mbb{R})$ and $y \mapsto (-e^{i2h (y-x)+1}+1) \chi(y/n) \in C_c^2(\mbb{R})$ and letting $n \to \infty$ using the dominated convergence theorem, we find  \begin{equation*}
		\varphi(t)
		\leq \liminf_{h \to 0} \mbb{E}^x \left( \int_{[0,t \wedge \tau)} \frac{Lf_{2h,x}(X_s)+Lf_{-2h,x}(X_s)}{4h^2} \, ds \right).
	\end{equation*}
	By the above considerations, we may apply the dominated convergence theorem and obtain using \eqref{exi-eq7} \begin{equation*}
		\varphi(t)
		\leq  \mbb{E}^x \left( \int_{[0,t \wedge \tau)} \frac{\partial^2}{\partial h^2} Lf_{h,x}(X_s) \bigg|_{h=0} \,ds \right)
		= \mbb{E}^x \left( \int_{[0,t \wedge \tau)} Lg_x(X_s) \, ds \right)
	\end{equation*}
	where $g_x(z) := (z-x)^2$. The growth assumptions \eqref{exi-eq9} for $k=1,2$ imply, by \eqref{exi-eq6}, that \begin{equation*}
		\left| b(z) + \int_{|y| \geq 1} y \, N(z,dy) \right| \leq c_1 (1+|z|) \quad \text{and} \quad Q(z) + \int_{\mbb{R}\backslash \{0\}} y^2 \, N(z,dy) \leq c_2 (1+z^2)
	\end{equation*}
	for all $z \in \mbb{R}$. Therefore it is not difficult to see from the definition of $L$ that there exist constants $C_1,C_2>0$ (which depend (continuously) on $x$, but not on $R$) such that $\varphi$ satisfies the integral inequality \begin{equation*}
		\varphi(t) \leq C_1 t +  C_2 \int_0^t\varphi(s) \, ds \qquad \text{for all $t \geq 0$.}
	\end{equation*}
	By the  Gronwall inequality, cf.\ \cite[Theorem A.43]{bm2}, we get $\varphi(t) \leq C_1 t \exp(C_2 t)$. Since the constants $C_1,C_2$ do not depend on $R$, the claim follows from Fatou's lemma. \par
	Now suppose that $q$ satisfies the assumptions of Theorem~\ref{exi-5} for $n \geq 2$ and that the claim holds true for $n-1$. Then $q(x,\cdot)$ is $2(n-1)$ times differentiable at $\xi=0$ and  it follows from the inductional hypothesis and Lemma~\ref{exi-6} that $\sup_{x \in K} \int_{\mbb{R}\backslash \{0\}} |y|^{2n-2} \, N(x,dy)< \infty$, $\sup_{x \in K} \mbb{E}^x(|X_t-x|^{2n-2})<\infty$ and \begin{equation}
		 \frac{\partial^{2n-2}}{\partial \xi^{2n-2}}q(x,\xi) =Q(x) \delta_{2,n} +  (-1)^{n-1} \int_{\mbb{R}\backslash \{0\}} y^{2n-2} e^{i \, y \xi} \, N(x,dy). \label{exi-eq15}
	\end{equation}
	(Here, $\delta_{k,n}$ denotes the Kronecker delta.) For fixed $h,x \in \mbb{R}$, set $f_{h,x}(z) := (z-x)^{2n-2} (e^{i \, h (z-x)}-1)$. By Taylor's formula and \eqref{exi-eq15}, it is not difficult to see that $\sup_{|h| \leq 1} (Lf_{h,x}(z)+Lf_{-h,x}(z))/h^2$ is locally bounded (in $z$). As in the first part, an application of Fatou's lemma and Dynkin's formula yields \begin{align*}
		\mbb{E}^x(|X_{t \wedge \tau}-x|^{2n})
		&\leq \liminf_{h \to 0} \frac{1}{4h^2} \left( \mbb{E}^x f_{2h,x}(X_{t \wedge \tau}) - f_{2h,x}(0) - f_{-2h,x}(0) + \mbb{E}^x f_{-2h,x}(X_{t \wedge \tau}) \right) \\
		&= \liminf_{h \to 0} \mbb{E}^x \left( \int_{[0,t \wedge \tau)} \frac{Lf_{2h,x}(X_{s})+L f_{-2h,x}(X_s)}{4h^2} \, ds \right).
	\end{align*}
	Since $\sup_{|h| \leq 1} (Lf_{h,x}(z)+Lf_{-h,x}(z))/h^2$ is locally bounded, it follows from the dominated convergence theorem that \begin{align*}
		\mbb{E}^x(|X_{t \wedge \tau}-x|^{2n})
		&\leq \mathbb{E}^x \left( \int_{[0,t \wedge \tau)} \frac{\partial^2}{\partial h^2} Lf_{h,x}(X_s) \big|_{h=0} \, ds  \right) 
		= \mbb{E}^x \left( \int_{[0,t \wedge \tau)} Lg_x(X_s) \, ds \right)
	\end{align*}
	for $g_x(z) := (z-x)^{2n}$. Using again the growth assumptions and Taylor's formula, we find that $\varphi(t) := \mbb{E}^x(|X_{t \wedge \tau}-x|^{2n} \I_{\{t<\tau\}})$ satisfies \begin{equation*}
		\varphi(t) \leq C_1 t + C_2 \int_0^t \varphi(s) \, ds, \qquad t \geq 0,
	\end{equation*}
	for $C_1,C_2>0$ (not depending on $R$). Applying Gronwall's inequality and Fatou's lemma finishes the proof.
\end{proof}

\begin{bem}	 \label{exi-6.5}
	Let $(X_t)_{t \geq 0}$ be a geometric Brownian motion, i.\,e.\ a solution to the SDE \begin{equation*}
			dX_t = \mu X_t \, dt + \sigma X_t \, dB_t
		\end{equation*}
		where $(B_t)_{t \geq 0}$ is a one-dimensional Brownian motion and $\mu \in \mbb{R}$, $\sigma>0$. One can easily verify that 
		\begin{equation*} 
			\mbb{E}^x((X_t-x)^2) = x^2 (e^{2\mu t} (e^{\sigma^2 t}-2)+1). 
		\end{equation*} 
		This means that exponential growth for large $t$ and linear growth for small $t$ is the best we can expect; in this sense the estimate in Theorem~\ref{exi-5} is optimal.
	\end{bem}

\begin{bsp} \label{exi-7}
	Let $(L_t)_{t \geq 0}$ be a ($d$-dimensional) L\'evy process with characteristic exponent $\psi$. Suppose that the L\'evy-driven SDE \begin{equation}
		dX_t = f(X_{t-}) \, dL_t, \qquad X_0 = x, \label{exi-eq17}
	\end{equation}
	has a unique solution $(X_t)_{t \geq 0}$ which is a L\'evy-type process and suppose that its symbol is given by $q(x,\xi) = \psi(f(x)^T \xi)$. If $\psi$ is $2n$-times differentiable at $\xi=0$, i.\,e.\ if $\mbb{E}(|L_t|^{2n})<\infty$, then it follows from Theorem~\ref{exi-5} that $\sup_{s \leq t} \sup_{x \in K} \mbb{E}^x(|X_s-x|^k) <\infty$ for any compact set $K \subseteq \mbb{R}^d$ and $k \leq 2n$. \par
	Important classes of examples are the following: \begin{enumerate}
		\item If $f$ is bounded and locally Lipschitz continuous, then the (unique) solution to \eqref{exi-eq17} is a L\'evy-type process with symbol $q(x,\xi) = \psi(f(x)^T \xi)$, cf.\ \cite{schnurr}. The boundedness of $f$ is needed to ensure that $(X_t)_{t \geq 0}$ is a L\'evy-type process; see \cite[Rem.~3.4]{schnurr} for an example where $f$ is locally Lipschitz continuous, but the solution fails to be a L\'evy-type process.
		\item ($d=2$) The generalized Ornstein-Uhlenbeck process is the solution to the SDE \begin{equation*}
			dX_t = X_{t-} \, dL_t^{(1)} + dL_t^{(2)}, \qquad X_0 = x.
		\end{equation*}
		In \cite[Theorem~3.1]{behme}, it was shown that $(X_t)_{t \geq 0}$ is a L\'evy-type process with symbol $q(x,\xi) = \psi((x,1)^T \xi)$. 
	\end{enumerate}
\end{bsp}

\section{Fractional moments} \label{s-frac}

This section is devoted to estimates of fractional moments, i.\,e.\ we study the small-time and large-time asymptotics of $\mathbb{E}^x \left(\sup_{s \leq t} |X_s-x|^{\alpha} \right)$ for $\alpha>0$. Depending on $\alpha$, there are different techniques to prove such estimates; the following ones have recently been used to obtain estimates for L\'evy processes: \begin{enumerate}
	\item $\alpha \in (0,1]$: bounded variation technique, cf.\ \cite[Theorem 1]{luschgy}.
	\item $\alpha \geq 1$: martingale technique based on the Burkholder--Davis--Gundy inequality, cf.\ \cite[Theorem 1]{luschgy}. 
	\item $\alpha \in (0,2)$: characterization via Blumenthal--Getoor indices, cf.\ \cite[Section 3]{deng}.
\end{enumerate}
Combining the bounded variation and martingale techniques with Theorem~\ref{def-5}, we will extend \cite[Theorem 1]{luschgy} to L\'evy-type processes in the first part of this section (Theorem~\ref{frac-1}, Theorem~\ref{frac-5}). In the second part, we will introduce generalized Blumenthal--Getoor indices and prove extensions of the results presented in \cite{deng}; cf.\ Theorem~\ref{frac-7}, Corollary~\ref{frac-11} and Theorem~\ref{frac-13}. Let us remark that the small-time estimate \begin{equation*}
	\mbb{E}^x \left( \sup_{s \leq t} |X_s-x|^{\alpha} \right) \leq Ct
\end{equation*}
is the best we can expect; otherwise, the Kolmogorov--Chentsov theorem would imply the existence of a modification with exclusively continuous sample paths. \par \medskip

We start with a combination of the bounded variation and martingale technique. A crucial ingredient to obtain estimates  is the Burkholder--Davis--Gundy inequality; for continuous martingales this inequality is standard, but for discontinuous martingales the proof is more involved, see e.\,g.\ \cite{novi85} or \cite{kallen}. The following theorem generalizes \cite[Theorem~3.1]{deng} and the corresponding result in \cite{luschgy}.

\begin{thm} \label{frac-1}
	Let $(X_t)_{t \geq 0} \sim (b(x),Q(x),N(x,dy))$ be a L\'evy-type process with bounded coefficients and suppose that \begin{equation*}
		M := \sup_{x \in \mbb{R}^d}  \left( \int_{|y| >1} |y|^{\alpha} \, N(x,dy)  + \int_{|y| \leq 1} |y|^{\beta} \, N(x,dy) \right)<\infty
	\end{equation*}
	for some $\alpha \in (0,1]$ and $\beta \in [0,2]$. \begin{enumerate}
		\item $\beta \in [1,2]$: Then there exists $C>0$ such that  \begin{align*} 
			\mbb{E}^x \left( \sup_{s \leq t} |X_s-x|^{\kappa} \right) 
			&\leq t^{\kappa} \sup_{x \in \mbb{R}^d} |b(x)|^{\kappa} + C t^{\kappa/2} \sup_{x \in \mbb{R}^d} |Q(x)|^{\kappa/2} \\
			&\quad + C t^{\kappa/\beta} \left(\sup_{x \in \mbb{R}^d} \int_{|y| \leq 1} |y|^{\beta} \, N(x,dy) \right)^{\kappa/\beta} + t^{\kappa/\alpha} \sup_{x \in \mbb{R}^d} \left( \int_{|y| >1} |y|^{\alpha} \, N(x,dy) \right)^{\kappa/\alpha}
		\end{align*}
		for all $t \geq 0$ and $\kappa \in [0,\alpha]$.
		\item $\beta \in [\alpha,1]$:  Then there exists $C>0$ such that  \begin{align*} 
			\mbb{E}^x \left( \sup_{s \leq t} |X_s-x|^{\kappa} \right) 
			&\leq t^{\kappa} \sup_{x \in \mbb{R}^d} \left|b(x)+ \int_{|y| \leq 1} y \, N(x,dy) \right|^{\kappa} + C t^{\kappa/2} \sup_{x \in \mbb{R}^d} |Q(x)|^{\kappa/2} \\
			&\quad + t^{\kappa/\beta} \left(\sup_{x \in \mbb{R}^d} \int_{|y| \leq 1} |y|^{\beta} \, N(x,dy) \right)^{\kappa/\beta} + t^{\kappa/\alpha} \sup_{x \in \mbb{R}^d} \left( \int_{|y| >1} |y|^{\alpha} \, N(x,dy) \right)^{\kappa/\alpha}
		\end{align*}
		for all $t \geq 0$ and $\kappa \in [0,\alpha]$.
		\item $\beta \in [0,\alpha] $: Then there exists $C>0$ such that  \begin{align*} 
			\mbb{E}^x \left( \sup_{s \leq t} |X_s-x|^{\kappa} \right) 
			&\leq t^{\kappa} \sup_{x \in \mbb{R}^d} \left|b(x)+ \int_{|y| \leq 1} y \, N(x,dy) \right|^{\kappa} + C t^{\kappa/2} \sup_{x \in \mbb{R}^d} |Q(x)|^{\kappa/2} \\
			&\quad + t^{\kappa/\alpha} \sup_{x \in \mbb{R}^d} \left( \int_{\mbb{R}^d\backslash \{0\}} |y|^{\alpha} \, N(x,dy) \right)^{\kappa/\alpha}
			\end{align*}
			for all $t \geq 0$ and $\kappa \in [0,\alpha]$.
	\end{enumerate}
\end{thm}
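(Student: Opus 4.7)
The plan is to invoke the L\'evy--It\^o-type decomposition $X_t-X_0 = X_t^{1}+X_t^{2}$ from Theorem~\ref{def-5}, split the resulting drift, Brownian, small-jump and large-jump summands via $(a+b)^{\kappa}\leq 2^{\kappa}(a^{\kappa}+b^{\kappa})$, and estimate each piece by a tailored combination of the Burkholder--Davis--Gundy (BDG) inequality, the bounded-variation (BV) technique for pure-jump integrals, and Lyapunov's inequality to interpolate the moment exponent. Identity \eqref{def-eq20} will be used throughout to translate integrals against the Cauchy measure $\nu^{\circ}$ into integrals against the kernel $N(x,dy)$.

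The drift contributes $t^{\kappa}\sup|b|^{\kappa}$ by a direct $L^{1}$-bound together with Jensen; BDG followed by Jensen (using $\kappa/2\leq 1$) controls the Brownian piece by $Ct^{\kappa/2}\sup|Q|^{\kappa/2}$. For case (i), $\beta\in[1,2]$, I would leave the small-jump part $M_t^{J}:=\int_{0}^{t}\!\!\int_{|k|\leq 1}k(X_{s-},z)(N^{\circ}(dz,ds)-\nu^{\circ}(dz)\,ds)$ compensated. BDG gives $\mbb{E}\sup_{s\leq t}|M_s^{J}|^{\kappa}\leq C\,\mbb{E}[M^{J}]_t^{\kappa/2}$, and the subadditivity $(\sum a_i)^{\beta/2}\leq\sum a_i^{\beta/2}$ (valid since $\beta/2\leq 1$), applied to $a_i=|\Delta X_i|^{2}\I_{|\Delta X_i|\leq 1}$, combined with the L\'evy-system formula, yields $\mbb{E}[M^{J}]_t^{\beta/2}\leq\mbb{E}\sum_{s\leq t}|\Delta X_s|^{\beta}\I_{|\Delta X_s|\leq 1}\leq t\sup_{x}\int_{|y|\leq 1}|y|^{\beta}N(x,dy)=tM_{\beta}$. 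Lyapunov's inequality (applicable because $\kappa\leq\alpha\leq\beta$) then promotes this to $\mbb{E}[M^{J}]_t^{\kappa/2}\leq(tM_{\beta})^{\kappa/\beta}$, giving the $t^{\kappa/\beta}$-term.

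In cases (ii) and (iii), the condition $\beta\leq 1$ forces $\int_{|y|\leq 1}|y|N(x,dy)\leq\int_{|y|\leq 1}|y|^{\beta}N(x,dy)<\infty$, so I would absorb the compensator $\int_{0}^{t}\int_{|y|\leq 1}y\,N(X_{s-},dy)\,ds$ into the drift, producing the ``true drift'' $b(x)\pm\int_{|y|\leq 1}yN(x,dy)$ appearing in the statement. The resulting uncompensated small-jump integral has finite variation, and with $Y:=\sum_{s\leq t}|\Delta X_s|\I_{|\Delta X_s|\leq 1}$ the inequality $Y^{\beta}\leq\sum|\Delta X_s|^{\beta}\I_{|\Delta X_s|\leq 1}$, the L\'evy system and one Lyapunov step ($\kappa\leq\beta$) yield the $t^{\kappa/\beta}$-term in case (ii). Case (iii) exploits $|y|^{\alpha}\leq|y|^{\beta}$ on $\{|y|\leq 1\}$ (valid since $\alpha\geq\beta$) to give $\int|y|^{\alpha}N(x,dy)<\infty$ uniformly, then bundles small and large jumps together and repeats the BV/Lyapunov step with $\alpha$ in place of $\beta$, producing $t^{\kappa/\alpha}$. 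The large-jump part $X^{2}$ in (i) and (ii) is treated analogously, restricted to $\{|y|>1\}$ and using the $\alpha$-moment hypothesis there.

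The main obstacle will be arithmetic bookkeeping: at each step one must verify that the exponent of a sum lies below $1$ so that $(\sum a_i)^{p}\leq\sum a_i^{p}$ applies, and that Lyapunov's promotion $\mbb{E} Z^{\kappa}\leq(\mbb{E} Z^{p})^{\kappa/p}$ is legal, which is precisely where the case split on $\beta$ enters. A secondary technical point is the applicability of BDG to the purely discontinuous martingale $M^{J}$ for exponents $\kappa\in(0,1]$; for this I would appeal to the discontinuous-martingale version in \cite{novi85} and \cite{kallen}.
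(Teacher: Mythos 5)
Your proposal follows essentially the same route as the paper: the Cinlar--Jacod decomposition of Theorem~\ref{def-5}, splitting of drift/Brownian/small-jump/large-jump pieces via subadditivity, BDG for the martingale parts, the L\'evy system for jump sums, absorption of the small-jump compensator into the drift when $\beta\le 1$, and a Jensen/Lyapunov interpolation to pass to the exponent $\kappa$. The only noteworthy difference is an immaterial reordering in case~(i): the paper first applies Jensen at the supremum level (raising the exponent from $\alpha$ to $\beta\in[1,2]$) and then invokes BDG at exponent $\beta\ge1$, whereas you apply BDG at exponent $\kappa\le1$ directly and then use Lyapunov on the quadratic variation $[M^J]_t$; the paper's order stays in the classical BDG range, while yours needs Novikov's sub-unit-exponent version, which you correctly cite. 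A small cosmetic point: since you first reduce to $\kappa\le\alpha\le1$, you should use the sharper subadditivity $(a+b)^{\kappa}\le a^{\kappa}+b^{\kappa}$ (no factor $2^{\kappa}$), which is what produces the constant-free drift and large-jump terms in the theorem's statement.
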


Since any L\'evy-type process with bounded coefficients satisfies $\sup_{x \in \mbb{R}^d} \int_{|y| \leq 1} |y|^2 \, N(x,dy)<\infty$, Theorem~\ref{frac-1}(i) is applicable with $\beta=2$ whenever $\sup_{x \in \mbb{R}^d} \int_{|y|>1} |y|^{\alpha} \, N(x,dy)<\infty$ for some $\alpha \in (0,1]$. Moreover, by the Markov property, Theorem~\ref{frac-1} gives also bounds for $\mathbb{E}^x \left( \sup_{s \leq t} |X_{s+r}-X_r|^{\kappa} \right)$ for any fixed $r \geq 0$. \par

It is well-known that a L\'evy process $(X_t)_{t \geq 0}$ with L\'evy triplet $(0,0,\nu(dy))$ has sample paths of bounded variation and satisfies $\mathbb{E}(|X_t|^{\alpha})<\infty$ if, and only if, $\int_{\mbb{R}^d\backslash \{0\}} |y|^{\alpha} \, \nu(dy)<\infty$ for some $\alpha \in (0,1]$, cf.\ \cite[Theorem 21.9, Theorem 25.3]{sato}. Theorem~\ref{frac-1} extends this statement to L\'evy-type processes. If $(X_t)_{t \geq 0} \sim (0,0,N(x,dy))$ is a L\'evy-type process such that $\sup_{x \in \mbb{R}^d} \int_{\mbb{R}^d\backslash \{0\}} |y|^{\alpha} \, N(x,dy)<\infty$ for some $\alpha \in (0,1]$, then Theorem~\ref{frac-1} shows that $(X_t)_{t \geq 0}$ has $\mbb{P}^x$-almost surely a finite (strong) $p$-variation on compact $t$-intervals for any $p>\alpha$ and $x \in \mbb{R}^d$. 

\begin{proof}[Proof of Theorem~\ref{frac-1}]
	Because of Jensen's inequality, it suffices to prove the claim for $\kappa = \alpha$. By Theorem~\ref{def-5}, there exist a Markov extension $(\Omega^{\circ},\mathcal{A}^{\circ},\mc{F}_t^{\circ},\mbb{P}^{\circ,x})$, a Brownian motion $(W_t^{\circ})_{t \geq 0}$, a Cauchy process $(L_t^{\circ})_{t \geq 0}$ with jump measure $N^{\circ}$ and $k,\sigma$ such that \eqref{def-eq20} holds and 
	\begin{equation} \label{frac-eq3} \begin{aligned}
		X_t-x &= \int_0^t b(X_{s-}) \, ds + \int_0^t \sigma(X_{s-}) \, dW_s^{\circ}+ \int_0^t \!\! \int_{|k|>1} k(X_{s-},z) \, N^{\circ}(dz,ds) \\ 
		& \quad + \int_0^t \!\! \int_{|k| \leq 1} k(X_{s-},z) \, (N^{\circ}(dz,ds)-\nu^{\circ}(dz) \, ds) 
	\end{aligned} \end{equation}
	$\mbb{P}^{\circ,x}$-almost surely. 
	First, we prove (i). By \eqref{frac-eq3}, we have \begin{align*}
		X_t-x 
		&= \int_0^t b(X_{s-}) \, ds + \int_0^t \sigma(X_{s-}) \, dW_s^{\circ}+ \sum_{0 \leq s \leq t} k(X_{s-},\Delta L_s^{\circ}) \I_{\{|k(X_{s-},\Delta L_s^{\circ})|>1\}} \\
		&\quad + \int_0^t \!\! \int_{|k| \leq 1} k(X_{s-},z) \, (N^{\circ}(dz,ds)-\nu^{\circ}(dz,ds)).
	\end{align*}
	Using the elementary estimate $(u+v)^{\alpha} \leq u^{\alpha}+v^{\alpha}$, $u,v \geq 0$, $\alpha \in (0,1]$, yields \begin{align*}
		\sup_{s \leq t} |X_s-x|^{\alpha} 
		&\leq \sup_{s \leq t} \left| \int_0^s b(X_{r-}) \, dr \right|^{\alpha} +\sup_{s \leq t} \left| \int_0^s \sigma(X_{r-}) \, W_r^{\circ} \right|^{\alpha} + \sum_{0 \leq s \leq t} |k(X_{s-},\Delta L_s^{\circ})|^{\alpha} \I_{\{|k(X_{s-},\Delta L_s^{\circ})|>1\}} \\ 
		&\quad + \sup_{s \leq t} \left| \int_0^s \!\! \int_{|k| \leq 1} k(X_{r-},z) \, (N^{\circ}(dz,dr)-\nu^{\circ}(dz,dr)) \right|^{\alpha} \\
		&\leq \sup_{x \in \mbb{R}^d} |b(x)|^{\alpha} t^{\alpha} + \sup_{s \leq t} \left| \int_0^s \sigma(X_{r-}) \, dW_r^{\circ} \right|^{\alpha}+ \int_0^t \!\! \int_{|k|>1} |k(X_{s-},z)|^{\alpha} \, N^{\circ}(dz,ds) \\ 
		&\quad + \sup_{s \leq t} \left| \int_0^s \!\! \int_{|k| \leq 1} k(X_{r-},z) \, (N^{\circ}(dz,dr)-\nu^{\circ}(dz,dr)) \right|^{\alpha} .
	\end{align*}
	Integrating both sides and using that, by Jensen's inequality, \begin{equation*}
		\mathbb{E}^{\circ,x} \left( \sup_{s \leq t} \left| \int_0^s \sigma(X_{r-}) \, W_r^{\circ} \right|^{\alpha} \right)
		\leq \mathbb{E}^{\circ,x} \left( \sup_{s \leq t} \left| \int_0^s \sigma(X_{r-}) \, W_r^{\circ} \right|^{2} \right)^{\alpha/2}
	\end{equation*}
	and \begin{align*}
		\mathbb{E}^{\circ,x} \bigg( \sup_{s \leq t}& \left| \int_0^s \!\! \int_{|k| \leq 1} k(X_{r-},z) \, (N^{\circ}(dz,dr)-\nu^{\circ}(dz,dr)) \right|^{\alpha} \bigg) \\
		&\leq \mathbb{E}^{\circ,x} \bigg( \sup_{s \leq t} \left| \int_0^s \!\! \int_{|k| \leq 1} k(X_{r-},z) \, (N^{\circ}(dz,dr)-\nu^{\circ}(dz,dr)) \right|^{\beta} \bigg)^{\alpha/\beta}, 
	\end{align*}
	the assertion follows (for $\kappa = \alpha$) from It\^o's isometry and the Burkholder--Davis--Gundy inequality \cite[Theorem~1]{novi85}. \par 
	If $\beta \in [0,1]$, then \begin{equation*}
		\mathbb{E}^{\circ,x} \left( \int_0^t  \!\! \int_{|k| \leq 1} |k(X_{s-},z)| \, \nu^{\circ}(dz) \, ds \right) 
		= \mathbb{E}^{\circ,x} \left( \int_0^t \!\! \int_{|y| \leq 1} |y|  \, N(X_{s-},dy) \, ds \right) 
		\leq Mt < \infty.
	\end{equation*}
	Therefore, we can write
	\begin{align*}
		X_t-x 
		&=  \int_0^t \tilde{b}(X_{s-}) \, ds + \int_0^t \sigma(X_{s-}) \, dW_s^{\circ} + \int_0^t \!\! \int_{\mbb{R}^d\backslash \{0\}} k(X_{s-},z) \, N^{\circ}(dz,ds) \notag \\
		&= \int_0^t \tilde{b}(X_{s-}) \, ds + \int_0^t \sigma(X_{s-}) \, dW_s^{\circ}+  \sum_{0 \leq s \leq t} k(X_{s-},\Delta L_s^{\circ}) 
	\end{align*}
	where $\tilde{b}(x) := b(x) + \int_{|y| \leq 1} y \, N(x,dy)$. The bounds for drift and diffusion are obtained as in the first part of this proof; it remains to estimate the jump part. If $1 \geq \beta \geq \alpha$, then another application of the inequality $(u+v)^{\beta} \leq u^{\beta}+v^{\beta}$ and Jensen's inequality yield \begin{align*}
		\mbb{E}^{\circ,x} \left( \sup_{s \leq t}\left| \sum_{0 \leq r \leq s} k(X_{r-},\Delta L_r) \I_{\{|k(X_{r-},\Delta L_r)| \leq 1\}} \right|^{\alpha} \right)
		&\leq \mbb{E}^{\circ,x} \left(\sup_{s \leq t} \left|\sum_{0 \leq r \leq s} k(X_{r-},\Delta L_r) \I_{\{|k(X_{r-},\Delta L_r)| \leq 1\}} \right|^{\beta} \right)^{\alpha/\beta} \\
		&\leq \mbb{E}^{\circ,x} \left( \sum_{0 \leq r \leq t} |k(X_{r-},\Delta L_r)|^{\beta} \I_{\{|k(X_{r-},\Delta L_r)| \leq 1\}} \right)^{\alpha/\beta} \\
		&= \mbb{E}^{\circ,x} \left( \int_0^t \!\! \int_{|y| \leq 1} |y|^{\beta} \, N(X_{s-},dy) \, ds \right)^{\alpha/\beta}.
	\end{align*}
	Estimating the large jumps in exactly the same way (but without applying Jensen's inequality), we get \begin{equation*}
		\mbb{E}^{\circ,x} \left( \sup_{s \leq t}\left| \sum_{0 \leq r \leq s} k(X_{r-},\Delta L_r) \right|^{\alpha} \right) \leq t^{\alpha/\beta} \sup_{x \in \mbb{R}^d} \left[ \int_{|y| \leq 1} |y|^{\beta} \, N(x,dy) \right]^{\alpha/\beta} + t \sup_{x \in \mbb{R}^d} \int_{|y| >1} |y|^{\alpha} \, N(x,dy).
	\end{equation*}
	Finally, if $\beta \in [0,\alpha]$, then $M<\infty$ for $\beta = \alpha$, and the claim follows from (ii).
\end{proof}	

\begin{bem} \label{frac-3} \begin{enumerate} 
	\item Let $\kappa \in (0,1]$ be such that $\inf_{\theta \in [\kappa,1]} \sup_{x \in \mbb{R}^d} \int_{\mbb{R}^d\backslash \{0\}} |y|^{\theta} \, N(x,dy)<\infty$ and \begin{equation*}
		q(x,\xi) = \int_{\mbb{R}^d \backslash \{0\}} (1-e^{iy \xi}) \, N(x,dy).
	\end{equation*}
	Then an application of Jensen's inequality and Theorem~\ref{frac-1} show that \begin{equation*}
		\mbb{E}^x \left( \sup_{s \leq t} |X_s-x|^{\kappa} \right) \leq \inf_{\theta \in [\kappa,1]} \left( t \sup_{x \in \mbb{R}^d} \int_{\mbb{R}^d \backslash \{0\}} |y|^{\theta} \, N(x,dy) \right)^{\kappa/\theta}
	\end{equation*}
	for any L\'evy-type process $(X_t)_{t \geq 0}$ with symbol $q$. This generalizes \cite[Theorem~3.2]{deng} where the inequality was proved for L\'evy processes.
	\item Let $(X_t)_{t \geq 0}$ be a L\'evy-type process and $\alpha \in (0,1]$ such that \begin{equation}
		f(t) := \sup_{s \leq t} \sup_{x \in \mbb{R}^d} \mbb{E}^x(|X_s-x|^{\alpha})<\infty \qquad \text{for all $t \geq 0$.} \tag{$\star$} \label{frac-st1}
	\end{equation}
	Then $\lim_{t \to \infty} f(t)/t$ exists and is finite. \par \medskip
	
	\emph{Indeed:} Since $(u+v)^{\alpha} \leq u^{\alpha}+v^{\alpha}$, $u,v \geq 0$, the Markov property gives \begin{equation*}
			\mbb{E}^x(|X_{t+s}-x|^{\alpha}) 
			\leq \mbb{E}^x  \left[ \mbb{E}^{X_t}(|X_s-X_0|^{\alpha}) \right] + \mbb{E}^x(|X_t-x|^{\alpha})
			\leq f(s)+f(t).
		\end{equation*}
		Consequently, $f$ is subadditive. Applying \cite[Theorem~6.6.4]{hille} finishes the proof. \par \medskip
		
		Note that, by Theorem~\ref{frac-1}, assumption \eqref{frac-st1} is, in particular, satisfied if $(X_t)_{t \geq 0}$ has bounded coefficients and $\alpha,\beta$ are as in Theorem~\ref{frac-1}.
	\end{enumerate}
\end{bem} 

The Burkholder--Davis--Gundy inequality yields also estimates of fractional moments for $\alpha \geq 1$: 

\begin{thm} \label{frac-5}
	Let $(X_t)_{t \geq 0} \sim (b(x),Q(x),N(x,dy))$ be a L\'evy-type process with bounded coefficients and $\alpha \geq 1$, $\beta \in [1,2]$ such that \begin{equation*}
		M := \sup_{x \in \mbb{R}^d} \left( \int_{|y| \leq 1} |y|^{\beta} \, N(x,dy) + \int_{|y| >1} |y|^{\alpha} \, N(x,dy) \right) <\infty.
	\end{equation*}
	\begin{enumerate}
		\item If $\alpha \in [1,2]$, then there exists $C>0$ such that \begin{align*}
			\mbb{E}^x \left( \sup_{s \leq t}  |X_s-x|^{\kappa} \right) 
			&\leq C  \sup_{x \in \mbb{R}^d} \bigg(t^{\kappa} \left|b(x)+ \int_{|y| >1} y \, N(x,dy) \right|^{\kappa} +  t^{\kappa/2} |Q(x)|^{\kappa/2} \bigg) \\
			&\quad + C \sup_{x \in \mbb{R}^d} \bigg( t^{\kappa/\beta} \left[  \int_{|y| \leq 1} |y|^{\beta} \, N(x,dy) \right]^{\kappa/\beta}
			+  t^{\kappa/\alpha} \left[\int_{|y| >1} |y|^{\alpha} \, N(x,dy) \right]^{\kappa/\alpha} \bigg)
		\end{align*}
		for all $t \geq 0$ and $\kappa \in [0,\alpha \wedge \beta]$. 
		\item If $\alpha>2$, then there exists $C>0$ such that \begin{align*}
			\mbb{E}^x \left( \sup_{s \leq t}  |X_s-x|^{\kappa} \right) 
			&\leq C \sup_{x \in \mbb{R}^d} \left( t^{\kappa} \left|b(x)+ \int_{|y| >1} y \, N(x,dy) \right|^{\kappa} + t^{\kappa/2}|Q(x)|^{\kappa/2} \right) \\
			&\quad + C \sup_{x \in \mbb{R}^d} \left( t^{\kappa/\alpha} \left[\int_{\mbb{R}^d \backslash \{0\}} |y|^{\alpha} \, N(x,dy) \right]^{\kappa/\alpha} + t^{\kappa/2} \left[ \int_{\mbb{R}^d \backslash \{0\}} |y|^2 \, N(x,dy) \right]^{\kappa/2} \right)
		\end{align*}
		for all $t \geq 0$ and $\kappa \in [0,\alpha]$. 
		\item (Wald's identity) Suppose that $q$ is of martingale-type, i.\,e. \begin{equation*}
			q(x,\xi) = \int_{\mbb{R}^d \backslash \{0\}} (1-e^{iy \cdot \xi}+iy \cdot \xi) \, N(x,dy),
		\end{equation*}
		and $\sup_{x \in \mbb{R}^d} \int_{\mbb{R}^d \backslash \{0\}} |y|^{\alpha} \, N(x,dy)<\infty$ for some $\alpha \in [1,2]$. Then $\mbb{E}^x(X_{\tau})=x$ holds for any stopping time $\tau$ such that $\mbb{E}^x(\tau^{1/\alpha})<\infty$.
	\end{enumerate}
\end{thm}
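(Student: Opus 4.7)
The approach for all three parts starts from the It\^o-type decomposition of Theorem~\ref{def-5}. Because $\alpha \geq 1$ forces the large jumps to be absolutely integrable, i.e.\ $\int_{|y|>1} |y|\, N(x,dy) \leq \int_{|y|>1}|y|^\alpha\, N(x,dy) \leq M$, I can compensate the large-jump integral and rewrite
\begin{equation*}
X_t - x = \int_0^t \tilde b(X_{s-})\, ds + \int_0^t \sigma(X_{s-})\, dW_s^\circ + M_t^{(1)} + M_t^{(2)},
\end{equation*}
with $\tilde b(x) := b(x)+\int_{|y|>1}y\, N(x,dy)$ and $M^{(1)}$, $M^{(2)}$ the compensated small- and large-jump martingales obtained from $\int\!\!\int_{|k|\leq 1}k\,(N^\circ-\nu^\circ)$ and $\int\!\!\int_{|k|>1}k\,(N^\circ-\nu^\circ)$, respectively. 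The four summands correspond bijectively to the four groups of terms on the right-hand sides in (i) and (ii). Minkowski's inequality (for $\kappa\geq 1$), the elementary subadditivity $(a+b)^\kappa\leq a^\kappa+b^\kappa$ (for $\kappa\leq 1$) and a Jensen step reduce the task to bounding $\mbb{E}\sup_{s\leq t}|M_s^{(i)}|^\kappa$ at the extreme exponents $\kappa=\alpha\wedge\beta$ in (i) and $\kappa\in\{2,\alpha\}$ in (ii).

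For part (i) the drift and Brownian bounds are essentially trivial and follow from BDG combined with It\^o's isometry. For the small-jump martingale I apply BDG (valid since $1\leq \kappa\leq \beta\leq 2$) to obtain $\mbb{E}\sup_{s\leq t}|M_s^{(1)}|^\beta\leq C\,\mbb{E}[M^{(1)}]_t^{\beta/2}$. Since $\beta\leq 2$ the map $x\mapsto x^{\beta/2}$ is subadditive, so $[M^{(1)}]_t^{\beta/2}\leq \sum_{s\leq t}|\Delta M_s^{(1)}|^\beta$, and passing to the predictable compensator gives a bound by $t\,\sup_x\int_{|y|\leq 1}|y|^\beta N(x,dy)$. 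A final Jensen step ($\kappa\leq \beta$) produces the factor $t^{\kappa/\beta}$. The large-jump martingale is handled identically with $\alpha$ in place of $\beta$.

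For part (ii) the regime $\kappa\leq 2$ is already covered by the scheme of (i) (using $|y|^2\leq|y|^\beta$ on $\{|y|\leq 1\}$ and $|y|^2\leq|y|^\alpha$ on $\{|y|>1\}$, which makes $\int|y|^2 N(x,dy)$ finite). For $\kappa\in(2,\alpha]$ the subadditivity trick breaks down, and I invoke Kunita's first moment inequality for purely discontinuous martingales: for $p\geq 2$,
\begin{equation*}
\mbb{E}\sup_{s\leq t}\left|\int_0^s\!\!\int h(X_{r-},z)(N^\circ-\nu^\circ)(dz,dr)\right|^p \leq C_p\Bigl[\mbb{E}\Bigl(\int_0^t\!\!\int |h|^2\,\nu^\circ(dz)\,ds\Bigr)^{p/2}+\mbb{E}\int_0^t\!\!\int |h|^p\,\nu^\circ(dz)\,ds\Bigr].
\end{equation*}
Applied with $p=\alpha$ to $h=k\,\I_{\{|k|\leq 1\}}$ and $h=k\,\I_{\{|k|>1\}}$ separately, the two Kunita summands produce exactly one of the mixed terms $t^{\kappa/2}[\sup\int|y|^2 N]^{\kappa/2}$ and $t^{\kappa/\alpha}[\sup\int|y|^\alpha N]^{\kappa/\alpha}$ claimed in (ii), after a final Jensen descent from $\alpha$ to $\kappa$.

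For part (iii), comparing the martingale-type symbol with the standard form \eqref{def-eq3} yields $Q(x)=0$ and $b(x)=-\int_{|y|>1}y\, N(x,dy)$, so $\tilde b\equiv 0$ and $M_t:=X_t-x$ is a local martingale. Optional sampling on the bounded stopping times $\tau\wedge n$ gives $\mbb{E}^x M_{\tau\wedge n}=0$, and it then suffices to prove uniform integrability of $(M_{\tau\wedge n})_n$, for which I show $\mbb{E}\sup_{s\leq\tau}|M_s|<\infty$. Davis' inequality gives $\mbb{E}\sup_{s\leq\tau}|M_s|\leq 3\,\mbb{E}[M]_\tau^{1/2}$; using $\ell^p$-monotonicity with $\alpha\leq 2$,
\begin{equation*}
[M]_\tau^{1/2}=\Bigl(\sum_{s\leq\tau}|\Delta M_s|^2\Bigr)^{1/2}\leq \Bigl(\sum_{s\leq\tau}|\Delta M_s|^\alpha\Bigr)^{1/\alpha}=: A_\tau^{1/\alpha}.
\end{equation*}
The increasing process $A_t$ admits the predictable compensator $\langle A\rangle_t=\int_0^t\!\int|y|^\alpha N(X_{s-},dy)\,ds\leq Mt$, so Lenglart's domination inequality, applied with exponent $1/\alpha\in(0,1]$, yields $\mbb{E}A_\tau^{1/\alpha}\leq C_\alpha\,\mbb{E}\langle A\rangle_\tau^{1/\alpha}\leq C_\alpha M^{1/\alpha}\,\mbb{E}\tau^{1/\alpha}<\infty$. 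The hypothesis $\mbb{E}\tau^{1/\alpha}<\infty$ is used precisely at this Lenglart step, and this is where I expect the main technical subtlety to lie: a naive BDG argument would only produce the strictly stronger requirement $(\mbb{E}\tau)^{1/\alpha}<\infty$, so passing through the auxiliary increasing functional $A_t$ and using Lenglart to trade one stochastic power for the power of a deterministic bound is the key trick.
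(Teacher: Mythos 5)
Your argument follows the same route as the paper: it uses the decomposition of Theorem~\ref{def-5} with the drift recentred to $\bar b(x)=b(x)+\int_{|y|>1}y\,N(x,dy)$, and then bounds the four pieces separately. Where the paper simply cites \cite[Theorems 1 and 2]{novi85} for the moment inequalities in (i)--(ii) and for the Wald identity in (iii), you reprove those black boxes from standard tools --- BDG plus $\ell^{p}$-subadditivity of the quadratic variation for exponents in $[1,2]$, the Kunita/Rosenthal inequality for exponents above $2$, and Davis' inequality combined with Lenglart domination for (iii) --- and all of these substitutions are correct, with the minor caveat that for $\alpha=1$ the exponent $1/\alpha=1$ falls outside the usual range of Lenglart's inequality and one should instead invoke the direct compensation identity $\mbb{E}A_\tau=\mbb{E}\langle A\rangle_\tau$. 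Your identification of the Lenglart step as the mechanism that reduces the requirement to $\mbb{E}^x(\tau^{1/\alpha})<\infty$ rather than $\mbb{E}^x(\tau)<\infty$ is precisely the content of Novikov's Theorem~2, so the proof is the paper's proof in expanded form.
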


\begin{proof}
	As in the proof of Theorem~\ref{frac-1}, we fix a Markov extension $(\Omega^{\circ},\mathcal{A}^{\circ},\mc{F}_t^{\circ},\mbb{P}^{\circ,x})$, a Brownian motion $(W_t^{\circ})_{t \geq 0}$, a Cauchy process $(L_t^{\circ})_{t \geq 0}$ with jump measure $N^{\circ}$ and $k,\sigma$ such that \eqref{def-eq20} and \eqref{frac-eq3} hold. As \begin{equation*}
		\mbb{E}^{\circ,x} \left( \int_0^t \!\! \int_{|k| > 1} |k(X_{s-},z)| \, \nu^{\circ}(dz) \, ds \right) 
		= \mbb{E}^{\circ,x} \left( \int_0^t \!\! \int_{|y| > 1} |y| \, N(x,dy) \right)
		\leq M t,
	\end{equation*}
	we can write \begin{align*}
		X_t-x 
		&= \int_0^t \bar{b}(X_{s-}) \,ds + \int_0^t \sigma(X_{s-}) \, dW_s^{\circ}+ \int_0^t \!\! \int_{|k|>1} k(X_{s-},z) \, (N^{\circ}(dz,ds)-\nu^{\circ}(dz) \, ds) \\
		&\quad + \int_0^t \!\! \int_{|k| \leq 1} k(X_{s-},z) \, (N^{\circ}(dz,ds)-\nu^{\circ}(dz) \, ds) 
	\end{align*} 
	where $\bar{b}(x) := b(x) + \int_{|y| >1} y \, N(x,dy)$. Note that $\bar{b}$ is well-defined since \begin{equation*}
		\int_{|y| >1} |y| \, N(x,dy) \stackrel{\alpha \geq 1}{\leq} \int_{|y|>1} |y|^{\alpha} \, N(x,dy) \leq M < \infty.
	\end{equation*}
	Using the elementary estimate \begin{equation*}
		\left( \sum_{i=1}^4 u_i \right)^{\alpha} \leq 4^{\alpha-1} \sum_{i=1}^4 u_i^{\alpha}, \qquad u_i \geq 0,
	\end{equation*}
	(i) and (ii) follow from \cite[Theorem~1]{novi85}, \eqref{def-eq20}, Jensen's inequality and It\^o's isometry for $\kappa = \alpha$. Again we apply Jensen's inequality to obtain the estimate for $\kappa \in [0,\alpha]$. Wald's identity is a direct consequence of \cite[Theorem~2]{novi85}.
\end{proof}

\begin{bem} \label{frac-6}
	By Theorem~\ref{frac-1} and Theorem~\ref{frac-5}, \begin{equation*}
		\sup_{x \in \mbb{R}^d} \int_{\mbb{R}^d \backslash \{0\}} |y|^{\alpha} \, N(x,dy) < \infty \implies \forall t \leq 1: \sup_{x \in \mbb{R}^d} \mbb{E}^x \left( \sup_{s \leq t} |X_s-x|^{\alpha} \right) \leq Ct
	\end{equation*}
	for any L\'evy-type process $(X_t)_{t \geq 0} \sim (0,0,N(x,dy))$ and $\alpha >0$. One can show that at least a partial converse holds true: \begin{equation*}
		\forall t \leq 1: \mbb{E}^x(|X_t-x|^{\alpha}) \leq Ct \implies \int_{\mbb{R}^d \backslash \{0\}} |y|^{\alpha} \, N(x,dy)<\infty.
	\end{equation*}
	This follows by combining the integrated heat kernel estimate \eqref{intro-eq1} with Fatou's lemma and the identity \begin{equation*}
		\int_{|y| \geq 1} |y|^{\alpha} \, N(x,dy) = \alpha \int_{[1,\infty)} r^{\alpha-1} N(x,\{y \in \mbb{R}^d; |y| \geq r\}) \, dr,
	\end{equation*}
	see \cite[Proposition 3.10]{ihke} for details.
\end{bem}

In the last part of this section, we describe the asymptotics of fractional moments in terms of the growth of the symbol. To this end, we recall the notion of Blumenthal--Getoor indices. Blumenthal and Getoor \cite{blumen} introduced various indices for L\'evy processes; we will use the following ones: For a L\'evy process $(L_t)_{t \geq 0}$ with characteristic exponent $\psi$ and L\'evy triplet $(b,Q,\nu)$, we call 
\begin{equation} \label{frac-eq5} \begin{aligned}
	\beta_0 
	&:= \sup \left\{\alpha \geq 0; \lim_{|\xi| \to 0} \frac{|\psi(\xi)|}{|\xi|^{\alpha}} = 0 \right\}
	= \sup \left\{\alpha \geq 0; \limsup_{|\xi| \to 0} \frac{|\psi(\xi)|}{|\xi|^{\alpha}} < \infty \right\},  \\
	\beta_{\infty} 
	&:= \inf \left\{\alpha \geq 0; \lim_{|\xi| \to \infty} \frac{|\psi(\xi)|}{|\xi|^\alpha} =0\right\}
	= \inf \left\{\alpha \geq 0; \limsup_{|\xi| \to \infty} \frac{|\psi(\xi)|}{|\xi|^\alpha} < \infty \right\}
\end{aligned} \end{equation}
the \emph{Blumenthal--Getoor index at $0$ and $\infty$}, respectively. Then $\beta_0, \beta_{\infty} \in [0,2]$ and \begin{equation*}
	\beta_0 
	= \sup \left\{ \alpha \leq 2; \int_{|y| \geq 1} |y|^{\alpha} \,\nu(dy) < \infty \right\}
	= \sup \left\{ \alpha \leq 2; \mbb{E}|L_t|^{\alpha} < \infty \right\}.
\end{equation*}
For a proof of the first equality see e.\,g.\ \cite[Proposition~5.4]{rs97}; the second equality follows from \cite[Theorem~25.3]{sato}. There are several ways to define so-called generalized Blumenthal--Getoor indices for L\'evy-type processes, cf.\ \cite{rs97} and \cite[Section 5.2]{ltp}. Following \cite{ltp}, we define for a family $(q(x,\xi))_{x \in \mbb{R}^d}$ of characteristic exponents the \emph{generalized Blumenthal--Getoor index at $0$ and $\infty$}, respectively, as 
\begin{equation} \label{frac-eq6} \begin{aligned}
	\beta_0^x &:= \sup \left\{\alpha \geq 0; \limsup_{|\xi| \to 0} \frac{1}{|\xi|^{\alpha}} \sup_{|y-x| \leq |\xi|^{-1}} \sup_{|\eta| \leq |\xi|} |q(y,\eta)| < \infty \right\}, \\
	\beta_{\infty}^x &:= \inf \left\{\alpha \geq 0;\limsup_{|\xi| \to \infty} \frac{1}{|\xi|^{\alpha}} \sup_{|y-x| \leq |\xi|^{-1}} \sup_{|\eta| \leq |\xi|} |q(y,\eta)| < \infty \right\}
\end{aligned} \end{equation}
for $x \in \mbb{R}^d$. The next theorem is one of our main results.

\begin{thm} \label{frac-7}
	Let $(X_t)_{t \geq 0}$ be a L\'evy-type process with symbol $q$ and let $x \in \mbb{R}^d$.  Suppose that there exist $\alpha,\beta \in (0,2]$, $\gamma<\beta$ and $C>0$ such that \begin{align*}
		|q(y,\xi)| \leq C (1+|y|^{\gamma}) |\xi|^{\beta}, \quad \quad \text{for all} \, \, \, |\xi| \leq 1, \, |y-x| \leq |\xi|^{-1},   \\
		|q(y,\xi)| \leq C(1+|y|^{\gamma}) |\xi|^{\alpha}, \quad\quad \text{for all} \, \, \, |\xi| \geq 1, \, |y-x| \leq |\xi|^{-1}.
	\end{align*}
	Then \begin{equation*}
		\mbb{E}^x \left( \sup_{s \leq t} |X_s-x|^{\kappa} \right) \leq C f(t)^{\kappa/\gamma}  \qquad \text{for all $t \leq 1$, $\kappa \in [0,\gamma]$,}
	\end{equation*}
	where $C=C(x,\gamma,\alpha,\beta)$ and $f(t) := t^{\frac{\gamma}{\alpha} \wedge 1}$.
\end{thm}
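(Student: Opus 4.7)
The plan is to combine Jensen's inequality, Lemma~\ref{frac-9}, and the layer-cake formula.

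First, since $u \mapsto u^{\kappa/\gamma}$ is concave on $[0,\infty)$ for $\kappa \in [0,\gamma]$, Jensen's inequality yields
\[
\mbb{E}^x \sup_{s \leq t} |X_s - x|^{\kappa} = \mbb{E}^x\bigl(\sup_{s \leq t}|X_s - x|^{\gamma}\bigr)^{\kappa/\gamma} \leq \bigl(\mbb{E}^x \sup_{s \leq t} |X_s - x|^{\gamma}\bigr)^{\kappa/\gamma},
\]
so I may reduce to the extremal case $\kappa = \gamma$ and aim at $\mbb{E}^x \sup_{s \leq t}|X_s - x|^{\gamma} \leq C f(t)$.

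Next, applying Lemma~\ref{frac-9} with the deterministic stopping time $\sigma = t$ gives
\[
\mbb{P}^x\bigl(\sup_{s\leq t}|X_s-x|>r\bigr) \leq C\,t\,\sup_{|y-x|\leq r}\,\sup_{|\xi|\leq 1/r}|q(y,\xi)|.
\]
The pairs $(y,\xi)$ occurring in this supremum automatically satisfy $|y-x|\leq r \leq 1/|\xi|$, so both hypotheses of the theorem apply. For $r\leq 1$ (where $1/r\geq 1$) the second bound is extremal at $|\xi|=1/r$ and yields a supremum $\leq C_x r^{-\alpha}$, hence $\mbb{P}^x(\sup_{s\leq t}|X_s-x|>r)\leq C_x t r^{-\alpha}$. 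For $r\geq 1$ (where $1/r\leq 1$) the first bound combined with $1+|y|^{\gamma}\leq C_x(1+r^{\gamma})$ for $|y-x|\leq r$ gives $\mbb{P}^x(\sup_{s\leq t}|X_s-x|>r) \leq C_x t r^{\gamma-\beta}$.

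Finally, I would apply the layer-cake formula
\[
\mbb{E}^x\sup_{s\leq t}|X_s-x|^{\gamma} = \gamma\int_0^{\infty} r^{\gamma-1}\,\mbb{P}^x\bigl(\sup_{s\leq t}|X_s-x|>r\bigr)\,dr,
\]
split at the natural cutoff $r^\ast := f(t)^{1/\gamma} = t^{(1/\alpha)\wedge(1/\gamma)}$ and at $r=1$, and insert $\min(1,\text{maximal-inequality bound})$ in each piece. The subinterval $[0,r^\ast]$, with the trivial bound $\mbb{P}\leq 1$, contributes the main term $(r^\ast)^{\gamma} = f(t)$; the subinterval $[r^\ast,1]$ contributes $\leq C_x f(t)$ by a direct computation that separately treats the cases $\gamma<\alpha$, $\gamma=\alpha$, and $\gamma>\alpha$. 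I expect the main obstacle to be the tail $\int_1^{\infty} r^{\gamma-1}\mbb{P}^x(\sup_{s\leq t}|X_s-x|>r)\,dr$: the bound $C_x t r^{\gamma-\beta}$ produces an integrand proportional to $r^{2\gamma-\beta-1}$, which is integrable at infinity only under the extra restriction $2\gamma<\beta$. Handling the complementary range $2\gamma\geq\beta$ (still with $\gamma<\beta$) will require an additional device -- for example a bootstrap that first establishes the estimate for some exponent $\kappa'<\beta-\gamma$ where the direct argument works and feeds the resulting Markov tail bound $\mbb{P}^x(\sup>r)\leq C f(t)^{\kappa'/\gamma} r^{-\kappa'}$ back into the layer-cake integral, or a localization at $\tau_R^x$ combined with Theorem~\ref{frac-1} applied to the stopped (bounded-coefficient) process while $\mbb{P}^x(\tau_R^x\leq t)$ is controlled separately. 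All $x$-dependent factors are absorbed into the constant $C=C(x,\gamma,\alpha,\beta)$.
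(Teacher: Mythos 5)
Your reduction to $\kappa=\gamma$ via Jensen and the layer-cake formula are exactly right, and you have correctly located the obstacle. But the gap is genuine and the fix you sketch is not the natural one; the paper closes it by using a different inequality from Lemma~\ref{frac-9}.

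The root of your $2\gamma<\beta$ restriction is the choice of \eqref{frac-eq11} (the version of the maximal inequality with the double supremum pulled outside). Taking $\sup_{|y-x|\leq r}$ forces you to replace $1+|y|^{\gamma}$ by $C_x(1+r^{\gamma})$, and that spurious $r^{\gamma}$ is precisely what ruins the tail integral. The paper instead uses \eqref{frac-eq9}, the version with $\sigma$ a stopping time and the $q$-term left \emph{inside} the expectation:
\begin{equation*}
\mbb{P}^x\Bigl(\sup_{s\leq \sigma}|X_s-x|>r\Bigr)\leq C\,\mbb{E}^x\Bigl(\int_{[0,\sigma\wedge\tau_r^x)}\sup_{|\xi|\leq r^{-1}}|q(X_s,\xi)|\,ds\Bigr).
\end{equation*}
Because the integral stops at $\tau_r^x$, one has $|X_s-x|<r\leq|\xi|^{-1}$ on the domain of integration, so the hypotheses apply pointwise with $y=X_s$ and give $\sup_{|\xi|\leq r^{-1}}|q(X_s,\xi)|\leq C(1+|X_s|^{\gamma})\,g(r)$, with $g(r)=r^{-\alpha}$ for $r\leq1$ and $g(r)=r^{-\beta}$ for $r\geq1$. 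The factor $1+|X_s|^{\gamma}$ stays under the expectation and never becomes a power of $r$. Inserting this into the layer-cake integral (with the localization $\tau_R^x$), one gets
\begin{equation*}
\mbb{E}^x\Bigl(\sup_{s\leq t\wedge\tau_R^x}|X_s-x|^{\gamma}\Bigr)\leq t^{\gamma/\alpha}+C\Bigl(\int_{t^{\gamma/\alpha}}^1 r^{-\alpha/\gamma}\,dr+\int_1^{\infty}r^{-\beta/\gamma}\,dr\Bigr)\,\mbb{E}^x\int_0^{t\wedge\tau_R^x}(1+|X_s|^{\gamma})\,ds,
\end{equation*}
and the tail integral is $\int_1^{\infty}r^{-\beta/\gamma}\,dr$, which converges under the stated hypothesis $\gamma<\beta$ alone. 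Writing $|X_s|^{\gamma}\leq C(|x|^{\gamma}+|X_s-x|^{\gamma})$ produces an integral inequality $\varphi(t)\leq C_1 f(t)(1+|x|^{\gamma})+C_1\int_0^t\varphi(s)\,ds$ for $\varphi(t)=\mbb{E}^x(\sup_{s\leq t\wedge\tau_R^x}|X_{s}-x|^{\gamma}\I_{\{t<\tau_R^x\}})$, and Gronwall plus Fatou ($R\to\infty$) finish. So your outline is not wrong in spirit, but the missing idea is to keep $X_s$ inside the expectation and use Gronwall, rather than bootstrapping exponents or appealing to Theorem~\ref{frac-1}.
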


Note that under the assumptions of Theorem~\ref{frac-7} we can choose for any $\kappa \in (0,\beta)$ some $\gamma<\beta$ such that $\kappa \in (0,\gamma]$; therefore Theorem~\ref{frac-7} gives moment estimates for all $\kappa \in (0,\beta)$. 

\begin{proof}[Proof of Theorem~\ref{frac-7}]
	Throughout this proof the constant $C_1=C_1(\gamma,\alpha,\beta)>0$ may vary from line to line. Without loss of generality, we may assume that $\gamma \neq \alpha$ and $\kappa \in [0,\gamma]$ (otherwise we choose $\gamma <\beta$ sufficiently large such that these two relations are satisfied). Again, we denote by $\tau^x(r) := \tau_r^x $ the exit time from $B(x,r)$. Fix $R>0$. By Lemma~\ref{frac-9}, \begin{align*}
		\mbb{E}^x \left( \sup_{s \leq t \wedge \tau_R^x} |X_s-x|^{\gamma} \right)
		&= \int_0^{\infty} \mbb{P}^x \left( \sup_{s \leq t \wedge \tau_R^x} |X_s-x| \geq r^{1/\gamma} \right) \, dr \\
		&\leq  \int_0^{\infty} \min \left\{1, \, C_1 \mbb{E}^x \left(\int_{[0,t \wedge \tau^x(r^{1/\gamma}) \wedge \tau_R^x)}\sup_{|\xi| \leq r^{-1/\gamma}} |q(X_{s},\xi)| \, ds \right) \right\} \, dr.
	\end{align*}
	Using the growth assumptions on $q$, we get \begin{align*}
		\mbb{E}^x \left( \sup_{s \leq t} |X_s-x|^{\gamma} \right)
		&\leq  \int_0^{t^{\gamma/\alpha}}1 \, dr + C_1 \int_{t^{\gamma/\alpha}}^{\infty}   \mbb{E}^x \left(\int_{[0,t \wedge \tau^x(r^{1/\gamma}) \wedge \tau_R^x)} \sup_{|\xi| \leq r^{-1/\gamma}} |q(X_s,\xi)| \, ds \right) \, dr  \\
		&\leq  t^{\gamma/\alpha} +C_1 \left(\int_{t^{\gamma/\alpha}}^1 r^{-\alpha/\gamma} \, dr+\int_{1}^{\infty} r^{-\beta/\gamma} \, dr \right) \mbb{E}^x\left( \int_{[0,t \wedge \tau_R^x)} (1+|X_s|^{\gamma}) \, ds \right) \\
		&\leq  f(t) + C_1 \int_0^t (1+\mbb{E}^x(|X_{s}|^{\gamma} \I_{\{s<\tau_R^x\}})) \, ds
	\end{align*} 
	for all $t \leq 1$. This shows that $\varphi(t) := \mbb{E}^x \left( \sup_{s \leq t \wedge \tau_R^x} |X_{s}-x|^{\gamma} \I_{\{t<\tau_R^x\}} \right)$ satisfies \begin{equation*}
		\varphi(t) 
		\leq \mbb{E}^x \left( \sup_{s \leq t \wedge \tau_R^x} |X_s-x|^{\gamma} \right)
		\leq C_1  f(t) (1+|x|^{\gamma})+ C_1 \int_0^t \varphi(s) \, ds.
	\end{equation*}
	Hence, by Gronwall's inequality, \begin{align*}
		\varphi(t) \leq C_1  f(t) (1+|x|^{\gamma}) \exp \left(C_1 t \right).
	\end{align*}
	Finally, since the constant $C_1$ does not depend on $R$, we can let $R \to \infty$ using Fatou's lemma. For $\kappa \in [0,\gamma]$ apply Jensen's inequality.
\end{proof}

\begin{bsp} \label{frac-10}
	Let $(X_t)_{t \geq 0}$ be a L\'evy-type process which is a solution to an SDE of the form \begin{equation*}
		dX_t = f(X_{t-}) \, dL_t, \qquad X_0 = x,
	\end{equation*}
	where $(L_t)_{t \geq 0}$ is a L\'evy process with characteristic exponent $\psi$ and $f$ a function of sublinear growth, i.\,e.\ $|f(x)| \leq C (1+|x|^{1-\epsilon})$ for some $C,\epsilon>0$. Denote by $\beta_0$ and $\beta_{\infty}$ the Blumenthal--Getoor indices of $\psi$ at $0$ and $\infty$, cf.\ \eqref{frac-eq5}. Then \begin{equation*}
		\mbb{E}^x \left( \sup_{s \leq t} |X_s-x|^{\kappa} \right) \leq  C' t^{\kappa/\beta_{\infty} \wedge 1} \qquad \text{for all $t \leq 1$, $\kappa \in [0,\beta_0)$, $\kappa \neq \beta_{\infty}$}.
	\end{equation*}
\end{bsp}
	
\begin{kor} \label{frac-11} 
	Let $(X_t)_{t \geq 0}$ be a L\'evy-type process with symbol $q$. Assume that \begin{equation}
		\limsup_{|\xi| \to \infty} \frac{1}{|\xi|^{\alpha}} \sup_{|y-x| \leq |\xi|^{-1}} \sup_{|\eta| \leq |\xi|} |q(y,\eta)| < \infty  \label{frac-eq8}
	\end{equation}
	for some $\alpha \in (0,2]$. Then \begin{equation*}
		\mbb{E}^x \left( \sup_{s \leq t} |X_s-x|^{\kappa} \right) \leq \begin{cases} C t^{\frac{\kappa}{\alpha} \wedge 1}, & \kappa \neq \alpha, \\ Ct |\log t|, & \kappa = \alpha \end{cases}
	\end{equation*}
	for all $t \leq 1$ and $\kappa \in [0,\beta_0^x)$. Here $\beta_0^x$ and $\beta_{\infty}^x$ denote the generalized Blumenthal--Getoor indices at $0$ and $\infty$, respectively, cf. \eqref{frac-eq6}.
\end{kor}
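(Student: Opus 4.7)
The plan is to follow the architecture of the proof of Theorem~\ref{frac-7} but to work directly with the scale-dependent growth of $q$ supplied by the generalized Blumenthal--Getoor indices; the logarithm in the critical regime $\kappa=\alpha$ then appears automatically at the crossover scale. Given $\kappa \in [0,\beta_0^x)$, pick $\beta \in (\kappa,\beta_0^x)$ (the case $\kappa = 0$ being trivial). The hypothesis \eqref{frac-eq8} together with the definition \eqref{frac-eq6} of $\beta_0^x$ yields a constant $C_0>0$ with
\begin{equation*}
  S(\xi) := \sup_{|y-x| \leq |\xi|^{-1}} \sup_{|\eta| \leq |\xi|} |q(y,\eta)| \leq \begin{cases} C_0|\xi|^{\alpha}, & |\xi| \geq 1, \\ C_0|\xi|^{\beta}, & |\xi| \leq 1. \end{cases}
\end{equation*}

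Applying the maximal inequality \eqref{frac-eq9} with $\sigma = t$, and noting that on $\{s < \tau_u^x\}$ one has $|X_s - x|<u$, so that $\sup_{|\eta| \leq u^{-1}}|q(X_s,\eta)| \leq S(u^{-1})$, gives
\begin{equation*}
  \mbb{P}^x\Bigl( \sup_{s \leq t} |X_s - x| > u \Bigr) \leq \min\bigl\{ 1, C_1\, t \, S(u^{-1}) \bigr\}, \qquad u > 0,
\end{equation*}
for a constant $C_1$ coming from Lemma~\ref{frac-9}. In particular the supremum is a.s.\ finite, so the layer-cake identity
\begin{equation*}
  \mbb{E}^x\Bigl(\sup_{s \leq t}|X_s-x|^{\kappa}\Bigr) = \kappa \int_0^{\infty} u^{\kappa-1}\, \mbb{P}^x\Bigl(\sup_{s \leq t}|X_s-x|>u\Bigr)\,du
\end{equation*}
reduces the claim to a one-dimensional integral estimate.

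To conclude I would split this integral at $u=1$ and inside the unit interval at the crossover $u^{\ast} := (C_0 C_1 t)^{1/\alpha}$. The tail $\int_1^{\infty}$ is $O(t)$ thanks to $\beta > \kappa$; the lower piece $\int_0^{u^{\ast}} \kappa u^{\kappa-1}\,du = (u^{\ast})^{\kappa}$ is of order $t^{\kappa/\alpha}$; and the middle piece $\kappa C_0 C_1 t \int_{u^{\ast}}^1 u^{\kappa-\alpha-1}\,du$ evaluates to $O(t^{\kappa/\alpha})$ if $\kappa < \alpha$, to $O(t)$ if $\kappa>\alpha$, and precisely to $(\kappa/\alpha)\, C_0 C_1\, t\, |\log(C_0 C_1 t)|$ if $\kappa=\alpha$. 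Recombining, the dominant contribution is $t^{\kappa/\alpha \wedge 1}$ in the non-critical cases and $t|\log t|$ in the critical case. The main obstacle is bookkeeping rather than a new idea: the three regimes $0<u<u^{\ast}$, $u^{\ast}<u<1$, $u>1$ must be tracked separately, and in the borderline case $\kappa = \alpha$ the integrable singularity of $u^{-1}$ at $u^{\ast}$ must be handled carefully so that the factor $|\log t|$ is isolated and not absorbed into a constant prematurely.
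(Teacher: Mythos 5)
Your proof is correct, and it follows a genuinely different (and in fact more complete) route than the paper's official proof. The paper proves Corollary~\ref{frac-11} as a one-line reduction to Theorem~\ref{frac-7}, choosing $\gamma=\kappa$ and invoking that theorem's conclusion $\mbb{E}^x(\sup_{s\leq t}|X_s-x|^\kappa) \leq C t^{\kappa/\alpha \wedge 1}$. That reduction handles the non-critical case $\kappa \neq \alpha$ cleanly, but Theorem~\ref{frac-7}'s proof explicitly assumes $\gamma \neq \alpha$ (replacing $\gamma$ by some $\gamma'>\alpha$ when necessary), so for $\kappa=\alpha$ it only delivers $t^{\kappa/\gamma'}$ for $\gamma'$ arbitrarily close to $\kappa$ — a family of bounds $t^{1-\epsilon}$ which is strictly weaker than $t|\log t|$. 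The paper itself concedes this in the remark directly after the corollary, saying the result ``can be proved using a very similar argument as in the proof of Theorem~\ref{frac-13}''. What you have done is carry out precisely that alternative argument: the layer-cake identity, the maximal inequality \eqref{frac-eq9}/\eqref{frac-eq11} giving $\mbb{P}^x(\sup_{s\leq t}|X_s-x|>u) \leq \min\{1, C_1 t\, S(u^{-1})\}$, the two-sided bound on $S$ coming from \eqref{frac-eq8} at $\infty$ and from $\kappa < \beta_0^x$ at $0$, and the three-region split at $u^*=(C_0C_1t)^{1/\alpha}$ and $u=1$. The advantage of your route is that the logarithmic correction drops out directly from $\int_{u^*}^1 u^{\kappa-\alpha-1}\,du$ when $\kappa=\alpha$, whereas the paper's official reduction cannot produce it. Two small bookkeeping points that are worth spelling out but do not affect the substance: (1) the limsup conditions give $S(\xi)\leq C_0|\xi|^{\alpha}$ only for $|\xi|$ large and $S(\xi)\leq C_0|\xi|^{\beta}$ only for $|\xi|$ small; extending to $|\xi|\geq 1$ and $|\xi|\leq 1$ uses local boundedness of $S$ on annuli, which follows from local boundedness of the symbol; (2) for $t$ so close to $1$ that $u^*\geq 1$, the middle interval is empty and the bound is absorbed into the constant $C$, which is unproblematic since $t^{\kappa/\alpha\wedge 1}$ is bounded away from $0$ on any interval $[t_0,1]$.
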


\begin{proof}
	Because of the assumptions on $q$, the growth conditions in  Theorem~\ref{frac-7} are satisfied for any $\gamma \in (0,\beta)$; in particular we can choose $\gamma = \kappa$. 
\end{proof}

\begin{bem} \begin{enumerate}
	\item Applying Corollary~\ref{frac-11} to $\alpha$-stable and tempered $\alpha$-stable processes shows that the estimates are optimal, cf.\ \cite[p.~431-32]{luschgy}.  
	\item By the very definition of the Blumenthal--Getoor index (see \eqref{frac-eq6}), we know that the limit \begin{equation*}
		\limsup_{|\xi| \to \infty} \frac{1}{|\xi|^{\alpha}} \sup_{|y-x| \leq |\xi|^{-1}} \sup_{|\eta| \leq |\xi|} |q(y,\eta)|
	\end{equation*}
	is finite (infinite) if $\alpha>\beta_{\infty}^x$ (if $\alpha<\beta_{\infty}^x$). Therefore, \eqref{frac-eq8} is violated for any $\alpha \in (0,\beta_{\infty}^x)$ and automatically satisfied for $\alpha \in (\beta_{\infty}^x,2]$. The case $\alpha = \beta_{\infty}^x$ has to be checked individually.
	\item 
	Combining Corollary~\ref{frac-11} and Fatou's lemma shows that \begin{equation*}
		\liminf_{t \to 0} \frac{1}{t^{1/\alpha}} \sup_{s \leq t} |X_s-x| = 0 \quad \text{$\mbb{P}^x$-a.s.}
	\end{equation*}
	for any $\alpha>\beta_{\infty}^x$; see also \cite[Theorem 5.16]{ltp}.
	\item Corollary~\ref{frac-11} can be proved using a very similar argument as in the proof of Theorem~\ref{frac-13}. The proof then shows in particular that the estimate\begin{equation*}
		\mbb{E}^x \left( \sup_{s \leq t} |X_s-x|^{\kappa} \wedge 1 \right) \leq \begin{cases} C t^{\frac{\kappa}{\alpha} \wedge 1}, & \kappa \neq \alpha, \\ Ct |\log t|, & \kappa = \alpha \end{cases}
	\end{equation*}
	holds true for any $\kappa>0$ and $t \leq 1$.
\end{enumerate} \end{bem}

There is an analogous result for the large-time asymptotics; it extends \cite[Theorem 3.3]{deng}.

\begin{thm} \label{frac-13}
	Let $(X_t)_{t \geq 0}$ be a L\'evy-type process with symbol $q$ and $\beta \in (0,2]$ such that \begin{align*}
		\limsup_{|\xi| \to 0} \frac{1}{|\xi|^{\beta}} \sup_{|x-y| \leq |\xi|^{-1}} \sup_{|\eta| \leq |\xi|} |q(x,\eta)| &<\infty,
	\end{align*}
	then \begin{equation*}
		\mbb{E}^x \left( \sup_{s \leq t} |X_s-x|^{\kappa} \right) \leq C t^{\kappa/\beta} \qquad \text{for all $t \geq 1$, $\kappa \in [0,\beta)$}.
	\end{equation*}
\end{thm}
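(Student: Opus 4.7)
My plan is to combine the maximal inequality from Lemma~\ref{frac-9} with the layer-cake representation of the $\kappa$-th moment, in the spirit of the proof of Theorem~\ref{frac-7} but now exploiting the \emph{low-frequency} growth assumption instead of the high-frequency one. No Gronwall iteration should be needed, because the hypothesis already controls $q(y,\eta)$ uniformly over $|y-x|\leq|\eta|^{-1}$, which is exactly the geometry forced by the exit-time truncation in Lemma~\ref{frac-9}.

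The first step is to turn the hypothesis into a tail estimate. The limsup condition furnishes constants $\xi_0>0$ and $K>0$ such that
\begin{equation*}
\sup_{|y-x|\leq|\xi|^{-1}}\sup_{|\eta|\leq|\xi|} |q(y,\eta)| \;\leq\; K|\xi|^{\beta} \qquad \text{for all } |\xi|\leq \xi_0.
\end{equation*}
Applying \eqref{frac-eq11} with $\sigma=t$ and substituting $u=|\xi|^{-1}$ yields, for some universal $C>0$,
\begin{equation*}
\mbb{P}^x\!\left(\sup_{s\leq t}|X_s-x|>u\right) \;\leq\; CKt\,u^{-\beta}, \qquad u\geq \xi_0^{-1},\; t\geq 0.
\end{equation*}

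The second step is the layer-cake identity
\begin{equation*}
\mbb{E}^x\!\left(\sup_{s\leq t}|X_s-x|^{\kappa}\right) \;=\; \kappa\int_0^{\infty} u^{\kappa-1}\,\mbb{P}^x\!\left(\sup_{s\leq t}|X_s-x|>u\right)du,
\end{equation*}
split at the threshold $U(t):=t^{1/\beta}$. The low-$u$ piece is bounded trivially by $U(t)^{\kappa}=t^{\kappa/\beta}$, while the high-$u$ piece uses the tail bound above; since $\kappa<\beta$, the integral $\int_{U(t)}^{\infty} u^{\kappa-1-\beta}\,du$ converges to a constant multiple of $U(t)^{\kappa-\beta}$, and multiplying by $CKt$ gives another term of order $t\cdot t^{(\kappa-\beta)/\beta}=t^{\kappa/\beta}$. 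This delivers the desired bound for all $t\geq \xi_0^{-\beta}$.

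Finally, for $1\leq t\leq \xi_0^{-\beta}$ the function $t\mapsto \mbb{E}^x(\sup_{s\leq t}|X_s-x|^{\kappa})$ is monotone, bounded above by its value at $t=\xi_0^{-\beta}$, and $t^{\kappa/\beta}\geq 1$ on this range, so enlarging the constant $C$ absorbs this compact piece. The only real subtlety, and the main bookkeeping obstacle, is making sure that the constants $C,K,\xi_0$ from the hypothesis and from Lemma~\ref{frac-9} are genuinely independent of $t$; modulo this verification, the whole argument reduces to a one-line optimisation $U(t)=t^{1/\beta}$ of the split layer-cake integral.
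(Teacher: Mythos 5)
Your proposal is correct and takes essentially the same route as the paper: both apply the maximal inequality \eqref{frac-eq11} with $\sigma=t$ to get a tail estimate, then use the layer-cake formula split at the threshold $t^{1/\beta}$ (the paper writes it as $r=t^{\kappa/\beta}$ after the substitution $r=u^{\kappa}$). Your treatment is slightly more explicit about the threshold $\xi_0$ coming from the limsup hypothesis and the compact range $1\leq t\leq\xi_0^{-\beta}$, a bookkeeping point the paper leaves implicit, but the argument is the same.
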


\begin{proof}
	An application of the maximal inequality \eqref{frac-eq11} yields \begin{align*}
		\mbb{E}^x \left( \sup_{s \leq t} |X_s-x|^{\kappa} \right) 
		&= \int_0^{\infty} \mbb{P}^x \left( \sup_{s \leq t} |X_s-x| \geq r^{1/\kappa} \right) \, dr \\
		&\leq \int_0^{\infty} \min \left\{1, C t \sup_{|y-x| \leq r^{1/\kappa}} \sup_{|\eta| \leq r^{-1/\kappa}} |q(y,\eta)| \right\} \, dr.
	\end{align*}
	Hence, \begin{equation*}
		\mbb{E}^x \left( \sup_{s \leq t} |X_s-x|^{\kappa} \right) 
		\leq \int_0^{t^{\kappa/\beta}} 1 \, dr + C' t \int_{t^{\kappa/\beta}}^{\infty} r^{-\beta/\kappa} \, dr
		= O(t^{\kappa/\beta}). \qedhere
	\end{equation*}
\end{proof}

\begin{bsp} \label{frac-15}
	Let $(X_t)_{t \geq 0}$ be a stable-like process, i.\,e.\ a L\'evy-type process with symbol $q(x,\xi) = |\xi|^{\alpha(x)}$ for a (continuous) function $\alpha: \mbb{R}^d \to (0,2)$. Then $\beta_0^x \geq \alpha_l := \inf_{y \in \mbb{R}^d} \alpha(y)$ and $\beta_{\infty}^x = \alpha(x)$. Hence, by Corollary~\ref{frac-11}, we have for any $\alpha>\alpha(x)$ and $\kappa  \in [0,\alpha_l)$, \begin{equation*}
		\mbb{E}^x \left(\sup_{s \leq t} |X_s-x|^{\kappa} \right) \leq \begin{cases} C t^{\frac{\kappa}{\alpha} \wedge 1}, & \kappa \neq \alpha, \\ Ct |\log t|, & \kappa = \alpha \end{cases}
	\end{equation*}
	for all $t \leq 1$. Moreover, by Theorem~\ref{frac-13}, 
	\begin{equation*}
			\mbb{E}^x \left( \sup_{s \leq t} |X_s-x|^{\kappa} \right) \leq C t^{\kappa/\beta}
	\end{equation*}
	for all $t \geq 1$ and  any $\beta<\alpha_l$, $\kappa \in [0,\beta)$.
\end{bsp}

For the readers' convenience we sum up conditions and results in Table~\ref{tab1}.

\begin{ack}
	I would like to thank Ren\'e Schilling for his valuable remarks and suggestions. Thanks also to the referees for carefully reading this paper and giving many helpful comments.
\end{ack}

\begin{landscape}
\begin{table} 
	\begin{tabular}{lll|lll}
		assumptions on the symbol & assumptions on the moments & & $\mbb{E}^x \left( \sup_{s \leq t} |X_s-x|^{\kappa} \right)$ & reference  \\
		\hline
		\vtop{ \hbox{\strut ``bounded variation''-type:}  \hbox{\strut $q(x,\xi) = \int_{\mbb{R}^d \backslash \{0\}} (1-e^{i \, y \xi}) \, N(x,dy)$}} & \vtop{ \hbox{\strut $\sup_{x \in \mbb{R}^d} \int_{|y| > 1} |y|^{\alpha} \, N(x,dy)< \infty$} \hbox{\strut $\sup_{x \in \mbb{R}^d} \int_{|y| \leq 1} |y|^{\beta} \, N(x,dy)<\infty$}} & \vtop{ \hbox{\strut $\alpha \in (0,1]$} \hbox{\strut $\beta \in [0,\alpha]$}} & $O(t^{\kappa/\alpha})$ for $\kappa \in [0,\alpha]$ & Theorem~\ref{frac-1} \B\T \\
		\hline
		\vtop{ \hbox{\strut ``bounded variation''-type:}  \hbox{\strut $q(x,\xi) = \int_{\mbb{R}^d \backslash \{0\}} (1-e^{i \, y \xi}) \, N(x,dy)$}} & \vtop{ \hbox{\strut $\sup_{x \in \mbb{R}^d} \int_{|y| > 1} |y|^{\alpha} \, N(x,dy)< \infty$} \hbox{\strut $\sup_{x \in \mbb{R}^d} \int_{|y| \leq 1} |y|^{\beta} \, N(x,dy)<\infty$}} & \vtop{ \hbox{\strut $\alpha \in (0,1]$} \hbox{\strut $\beta \in [\alpha,1]$}} & $O(t^{\kappa/\alpha}+t^{\kappa/\beta})$ for $\kappa \in [0,\alpha]$ & Theorem~\ref{frac-1} \B\T \\
		\hline 
		\vtop{ \hbox{\strut ``pure-jump''-type:}  \hbox{\strut $q(x,\xi) = \int_{\mbb{R}^d \backslash \{0\}} (1-e^{i \, y \xi}+iy \xi \I_{(0,1]}(|y|))) \, N(x,dy)$}} & \vtop{ \hbox{\strut $\sup_{x \in \mbb{R}^d} \int_{|y| > 1} |y|^{\alpha} \, N(x,dy)< \infty$} \hbox{\strut $\sup_{x \in \mbb{R}^d} \int_{|y| \leq 1} |y|^{\beta} \, N(x,dy)<\infty$}} & \vtop{ \hbox{\strut $\alpha \in (0,1]$} \hbox{\strut $\beta \in [1,2]$}} & $O(t^{\kappa/\alpha}+t^{\kappa/\beta})$ for $\kappa \in [0,\alpha]$ & Theorem~\ref{frac-1} \B\T \\
			\hline 
			\vtop{ \hbox{\strut ``martingale''-type:}  \hbox{\strut $q(x,\xi) = \int_{\mbb{R}^d \backslash \{0\}} (1-e^{i \, y \xi}+iy \xi) \, N(x,dy)$}} & \vtop{ \hbox{\strut $\sup_{x \in \mbb{R}^d} \int_{|y| > 1} |y|^{\alpha} \, N(x,dy)< \infty$} \hbox{\strut $\sup_{x \in \mbb{R}^d} \int_{|y| \leq 1} |y|^{\beta} \, N(x,dy)<\infty$}} & \vtop{ \hbox{\strut $\alpha \in [1,2]$} \hbox{\strut $\beta \in [1,2]$}} & $O(t^{\kappa/\alpha}+t^{\kappa/\beta})$ for $\kappa \in [0,\alpha \wedge \beta]$ & Theorem~\ref{frac-5} \B\T \\
			\hline 
			\vtop{ \hbox{\strut ``martingale''-type:}  \hbox{\strut $q(x,\xi) = \int_{\mbb{R}^d \backslash \{0\}} (1-e^{i \, y \xi}+iy \xi) \, N(x,dy)$}} & \vtop{ \hbox{\strut $\sup_{x \in \mbb{R}^d} \int_{|y| > 1} |y|^{\alpha} \, N(x,dy)< \infty$} \hbox{\strut $\sup_{x \in \mbb{R}^d} \int_{|y| \leq 1} |y|^2 \, N(x,dy)<\infty$}} & $\alpha >2$ & $O(t^{\kappa/\alpha}+t^{\kappa/2})$ for $\kappa \in [0,\alpha]$ & Theorem~\ref{frac-5} \B\T \\
			\hline 
			\vtop{\hbox{\strut $\forall |\xi| \leq 1: \sup_{|y-x| \leq |\xi|^{-1}} |q(y,\xi)| \leq C (1+|x|^{\gamma}) |\xi|^{\beta}$} \hbox{\strut $\forall |\xi| \geq 1: \sup_{|y-x| \leq |\xi|^{-1}} |q(y,\xi)| \leq C (1+|x|^{\gamma}) |\xi|^{\alpha}$} } & & $\gamma<\beta$ & \vtop{\hbox{\strut $O(t^{\kappa (\alpha^{-1} \wedge \gamma^{-1})})$, $t \to 0$, for $\kappa \in [0,\beta), \kappa \neq \alpha$} \hbox{\strut}} & Theorem~\ref{frac-7} \B\T \\
			\hline 
			$\forall |\xi| \geq 1: \sup_{|y-x| \leq |\xi|^{-1}} \sup_{|\eta| \leq |\xi|} |q(y,\eta)| \leq C |\xi|^{\alpha}$  & & & $O(t^{\kappa/\alpha \wedge 1})$, $t \to 0$, for $\kappa \in [0,\beta_0^x), \kappa \neq \alpha$ & Corollary~\ref{frac-11} \B\T \\
			\hline
			$\forall |\xi| \leq 1: \sup_{|y-x| \leq |\xi|^{-1}} \sup_{|\eta| \leq |\xi|} |q(y,\eta)| \leq C |\xi|^{\beta}$ & & & $O(t^{\kappa/\beta})$, $t \to \infty$, for $\kappa \in [0,\beta)$ & Theorem~\ref{frac-13} \T
	\end{tabular}
	\caption{Estimates of fractional moments of (pure-jump) L\'evy-type processes.} 
		\label{tab1}
\end{table}
\end{landscape}

\end{document}